\documentclass[12pt]{article}

\usepackage[T1]{fontenc}
\usepackage[english]{babel}

\usepackage[a4paper, margin=3.2cm]{geometry}
\usepackage{amsmath, amssymb, amsthm}
\usepackage{bbm}
\usepackage[hidelinks]{hyperref}

\newtheorem{thm}{Theorem}
\newtheorem{lem}{Lemma}
\newtheorem{rem}{Remark}
\newtheorem{prop}{Proposition}
\newtheorem*{AH}{Assumption (H)}
\newtheorem*{AP}{Assumption (P)}

\newcommand{\R}{\mathbb{R}}
\newcommand{\C}{\mathbb{C}}
\newcommand{\N}{\mathbb{N}}
\newcommand{\Z}{\mathbb{Z}}
\newcommand{\T}{\mathbb{T}}

\usepackage{graphicx}
\usepackage{xcolor}

\newcommand{\eps}{\varepsilon}
\def\rest{\hskip 1pt{\hbox to 10.8pt{\hfill
\vrule height 7pt width 0.4pt depth 0pt\hbox{\vrule height 0.4pt
width 7.6pt depth 0pt}\hfill}}}

\title{On partial diffusion and mixing without hypoellipticity}
\author{Xu'an Dou, Delphine Salort and Didier Smets}
\date{}

\begin{document}
\maketitle

\begin{abstract}
A simple Markov process is considered involving a diffusion in one direction and
a transport in a transverse direction. Quantitative mixing rate estimates are 
obtained with limited assumptions about the transport field, which might be highly 
irregular and/or highly degenerate, in particular quite far from satisfying an 
hypoellipticity type assumption.
\end{abstract} 

\section{Introduction}
We consider the linear partial differential equation 
\begin{equation}\label{eq:u}
\partial_t u - \partial_{xx}u + V(x)\partial_y u = 0 
\end{equation}
for the unknown $ u \equiv u(t,x,y) : \R_+ \times \T \times \T \to \R $, where $
\T = \R/\Z $ denotes the unit flat torus, and $ V : \T \to \R $ is a given
bounded Borel measurable function.
It reflects the interplay between a linear diffusion in one variable $x$, 
and a transport at velocity $V(x)$ in a transverse variable $y$. 

Our aim is to study the mixing properties induced by 
this coupling, and more precisely to derive sufficient conditions, 
in terms of $V$ and of the initial data, ensuring the exponential 
relaxation towards the unique invariant state with the same mass, 
here a constant state.

Equation~\eqref{eq:u} is arguably the simplest model for which the
problem makes sense and yet doesn't necessarily have an immediate answer. The 
principal part of the operator $L := -\partial_{xx} + V(x)\partial_y$ 
is only semi-definite, $L$ is not coercive in an $L^2$ sense since  
\begin{equation*}\label{eq:nop}
	(Lu, u) =  \int_\T\int_\T |\partial_x u|^2 \,dx dy, 
\end{equation*}
and there is no classical Poincaré inequality for the latter quantity as it vanishes
for functions that only depend on $y$. 

Note that one has the identity 
\begin{equation*}\label{eq:hor}
	\partial_t + L = X_0 - X_1^2, \qquad X_0 := \partial_t +
	V(x)\partial_y, \quad X_1 := \partial_x,    
\end{equation*}
in terms of the derivations $X_0$ and $X_1.$ In a celebrated paper \cite{Hor67}, 
H\"ormander proved that second order differential operators of the form 
$
X_0 - \sum_{i=1}^r X_i^2,
$
are hypoelliptic provided, at each point $m$ of a $\mathcal{C}^\infty$ manifold 
$M$, the iterated Lie brackets between the $\mathcal{C}^\infty$ vector 
fields $X_i$ ($0\leq i \leq r$) on $M$ span all of $T_mM$. That 
condition is also necessary when the vector fields are analytic, and in 
the general $\mathcal{C}^\infty$ case Amano \cite{Ama79} showed that it 
is necessary that the system
\begin{equation}\label{eq:controlsys}
	\dot q(s) = \sum_{i = 0}^r \xi_i(s) X_i(q(s)),
\end{equation}
with real valued controls $\xi_i \in \mathcal{C}^\infty$, is
controllable in every subdomain of $M.$\\
The isotropic and $L^2$
based subelliptic estimates of \cite{Hor67} were extended by Folland, 
Rothschild and Stein \cite{FolSte74, Fol75, RotSte76} into anisotropic 
and $L^p$ estimates for the fundamental solutions, and Jerison \cite{Jer86} 
later derived the Poincaré inequality, on balls of the so-called 
control distance associated to the vector fields $X_i.$\\
In the simple framework of \eqref{eq:u}, the commutators are given by
$$
C_0 := X_0 \quad\text{ and } \quad C_{k+1} := [X_1, C_{k}] = V^{(k)}(x)\partial_y,
\quad\text{ for all } k \geq 0
$$ 
(other forms of commutators identically vanish), and therefore 
hypoellipticity occurs when critical points of $V$ are at 
most finitely degenerate, and hence of finite number. 

\smallskip

Although not directly related through its definition, hypoellipticity itself 
is a sufficient condition for exponential relaxation towards 
equilibrium for \eqref{eq:u}. 
As a matter of fact, a stronger conclusion actually holds in such cases. 
Indeed, it was proved by Bedrossian and Coti Zelati 
\cite{BedCot17}, and D. Wei \cite{Wei21}, that in the context of 
Equation~\eqref{eq:u} with an additional coefficient $\nu > 0$ in front 
of the diffusion term, not only exponential relaxation holds (say, in
$L^2(\T^2)$), but 
its rate, for sufficiently large frequencies 
in $y$ and as $\nu \to 0$, is of magnitude $O(\nu^\frac{n}{n+2}),$ 
where $n$ is the highest degeneracy order of critical points of $V$. 
In particular, it is larger than $O(\nu)$, which would correspond to 
a purely diffusive situation. That phenomenon has been termed {\it enhanced} 
dissipation, in this and related contexts. 

\smallskip

On the other hand, hypoellipticity, which enforces a condition 
locally everywhere, is certainly not a necessary condition for 
exponential relaxation, which has a more 
global nature. The notion of global hypoellipticity (i.e. smooth 
second terms have smooth solutions), which is weaker than its 
local counterpart, is already closer to our goal. In  
\cite{FujOmo83}, Fujiwara and Omori exhibited  
globally hypoelliptic operators of the form $X_0^2 + X_1^2 = 
\partial_{xx} + V^2(x)\partial_{yy}$, where $V$ is smooth but 
constant on some intervals (we shall call such {\it plateaux}) and hence 
indefinitely degenerate. Because of the plateaux, they do not satisfy
the Fefferman-Phong condition 
\cite{FefPho83}, and therefore do not satisfy subelliptic
estimates either. The short proofs in \cite{FujOmo83} are 
based on Fourier decomposition in $y$ and spectral estimates in each mode (note 
the operators are symmetric in that case). Omori and Kobayashi \cite{OmoKob99}  
later conjectured that, for sums of squares, a sufficient condition for 
global hypoellipticity is the global controllability of \eqref{eq:controlsys}.  

\medskip

Note that an immediate necessary condition on $V$ for mixing 
in \eqref{eq:u} is that it should not be identically constant. 
Otherwise indeed, the  straightforward change of variable 
$y \mapsto y - Vt$ reduces it to a pure diffusion in $x$, which 
obviously does not induce any mixing in $y.$ As we shall see,  
that condition is also sufficient, provided the initial data is in 
$L^2(\T^2)$ and the relaxation is also understood in $L^2(\T^2)$.  
In the next paragraph though, we recall that Equation~\eqref{eq:u} 
has also a probabilistic interpretation as a Markov process. For that 
reason, or simply because \eqref{eq:u} preserves mass, we are  
specially interested in obtaining relaxation estimates that are valid in 
the context of $L^1$ or even Radon measure initial data, and we stress 
upfront that there will be in general no $L^1$ to $L^2$ smoothing 
effect under our assumptions.
In the sequel of this paper, we shall therefore be  
interested in obtaining quantitative relaxation estimates 
for Radon measure initial data and for large classes of $V$, including 
different non (globally) hypoelliptic regimes, such as when the 
velocity field $V$ is rough, has jumps, or exhibits plateaux.
Such situations appear in recent modeling of bacteria density 
(see \cite{JLT2025} and references therein, where the velocity field 
of the internal variable can be piecewise constant), and imply 
highly non-uniform  mixing across phase space. Plateaux, e.g., act as partial traps in 
which the mass dissipates only slowly and without local smoothing 
effects in $y$. 

\medskip

From a probabilistic perspective, Equation~\eqref{eq:u} is associated to the
stochastic Markov process $(\mathcal{X}_t, \mathcal{Y}_t)$ where
\begin{equation}\label{eq:sp}
\mathcal{X}_t := \mathcal{X}_0 + \sqrt{2} \mathcal{W}_t \mod 1, \qquad 
\mathcal{Y}_t = \mathcal{Y}_0 + \int_0^t V(\mathcal{X}_s) \, ds \mod 1,
\end{equation}
and where $\mathcal{W}_t$ refers to a standard Wiener process in $\R$ starting from 
the origin. If we denote by $\mu_t$ the law of $(\mathcal{X}_t, \mathcal{Y}_t)$, then for any 
$t \geq 0$ and any $\varphi \in \mathcal{C}^\infty([0,t], \T^2)$ we have, by Ito's formula, 
\begin{equation*}\label{eq:weaklaw}
\int_{\T^2} \varphi(t,\cdot,\cdot) d\mu_t = \int_{\T^2} \varphi(0, \cdot,\cdot) d\mu_0 +
\int_0^t\int_{\T^2} \left[ \partial_t + \partial_{xx} +
V(x)\partial_y\right]\varphi \, d\mu_s ds,   
\end{equation*}
which is a weak formulation for \eqref{eq:u}. The transition probabilities
\begin{equation}\label{eq:tp}
P_t(x,y,B) := \mathbb{P}\big( (\mathcal{X}_t,\mathcal{Y}_t) \in B\: | \: 
(\mathcal{X}_0, \mathcal{Y}_0) = (x,y) \big)
\end{equation}
are defined for $t \geq 0$, $(x,y) \in \T^2$, and $B \in \mathcal{B}(\T^2)$, 
the set of Borel subsets of $\T^2$. In PDE
contexts, these correspond to the so-called fundamental 
solutions or Green functions for the linear operator associated to \eqref{eq:u}, 
that is the solutions corresponding to Dirac $\delta_{x,y}$ initial data; but 
without further assumptions on $V$, they need
not be absolutely continuous (and a fortiori not smooth) with respect 
to the Lebesgue measure on $\T^2$. 
For fixed $t\geq 0$ and $B \in \mathcal{B}(\T^2)$, $(x,y) \mapsto P_t(x,y,B)$ is 
Borel measurable from $\T^2$ to $\R$, and for fixed $t\geq 0$ and $(x,y) \in \T^2$, 
$B \mapsto P_t(x,y,B)$ is a probability measure on $(\T^2,\mathcal{B}(\T^2))$.\\
The Markov property is reflected in the Chapman-Kolmogorov equations:
\begin{equation}\label{eq:CK}
P_{t+s}(x, y, B) = \int_{\T^2} P_t(x, y, dx'dy')P_s(x', y', B),
\end{equation}
for all $t,s\geq 0$ and $B \in \mathcal{B}(\T^2).$ The
transition operators $(S(t))_{t\geq 0}$ acting in the space $\mathcal{M}(\T^2)$ 
of Radon measures on $\T^2$ and defined by 
$$
\big(S(t) \mu\big)(B) := \int_{\T^2} P_y(x,y,B) \mu(dxdy) \qquad\forall B \in \mathcal{B}(\T^2)
$$
hence satisfy the semi-group property $S(t+s) = S(t)S(s)$, $\forall s,t \geq 0$. 
That semi-group is conservative: 
\begin{equation*}
\int_{\T^2} S(t)\mu = \int_{\T^2} \mu \qquad \forall\, t\geq 0, \forall\, \mu \in
\mathcal{M}(\T^2),
\end{equation*}
and order preserving: 
\begin{equation*}
S(t)\mu \geq S(t) \nu \qquad \forall\, t\geq 0, \forall\, \mu \geq \nu \in 
\mathcal{M}(\T^2).
\end{equation*}
From the assumption that $V$ is bounded, it follows that $(S(t))_{t\geq 0}$ is 
a weakly-$\star$ continuous semi-group on $\mathcal{M}(\T^2)$. Using the 
maximum principle for \eqref{eq:u}, it also follows that the restriction of 
$(S(t))_{t\geq 0}$ to $L^p(\T^2)$ is a strongly continuous semi-group on
the Banach space $L^p(\T^2)$ for $1 \leq p < + \infty$, and a weakly-$\star$ 
continuous semi-group on $L^\infty(\T^2).$ 

\medskip
 
Now that the functional set-up is in place, we can state our main two results. 
Their proofs combine comparison arguments, with on one hand averaging arguments 
(Theorem \ref{thm:plateau}) and on the other hand spectral arguments (Theorem
\ref{thm:Hor}). In both cases, the main step is to obtain a first (retarded time) 
pointwise lower bound on solutions starting as probability measures, and then 
rely on the following lemma which is nothing but an elementary special case of 
Doeblin's theorem~\cite{Dob40} (see also \cite{CanMis23} and the references 
therein for a recent account on so-called Doeblin-Harris type results). Note that 
the pointwise lower bounds \eqref{eq:lowerbP} and \eqref{eq:lowerbH} in 
Theorem~\ref{thm:plateau} and Theorem~\ref{thm:Hor} can 
be viewed as {\bf one sided} regularizing effects from $\mathcal{M}(\T^2)$ into $L^\infty(\T^2)$. 

\begin{lem}\label{lem:doeblin}
Let $(S(t))_{t\geq 0}$ be a conservative order preserving semi-group on 
$\mathcal{M}(\T^2)$ associated to the transition probabilities $(P_t)_{t\geq 0}$. Suppose that 
\begin{equation}\label{eq:uniformmixing}
P_{t_*}(x,y, \cdot) \geq \alpha_* \qquad \forall\, (x,y) \in \T^2,
\end{equation}
for some $t_* > 0$ and $0 < \alpha_* < 1.$ Then
\begin{equation}\label{eq:expopointwise}
P_t(x,y, \cdot) \geq 1 - C
\exp(-\rho t)
\qquad \forall\, t \geq 0, \forall\, (x,y) \in \T^2,
\end{equation}
where $C := 1/(1- \alpha_*)$ and $\rho := \log(C)/ t_* > 0.$
In particular, 
\begin{equation}\label{eq:exposemigroup}
\|S(t)\mu - \int_{\T^2} \, d\mu\|_{\mathcal{M}(\T^2)} \leq C 
e^{-\rho t}\|\mu - \int_{\T^2} \, d\mu\|_{\mathcal{M}(\T^2)}
\end{equation}
for any $\mu \in \mathcal{M}(\T^2)$ and any $t\geq 0.$
\end{lem}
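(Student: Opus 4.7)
The plan is to use a classical Doeblin-type iteration of \eqref{eq:uniformmixing} through the Chapman-Kolmogorov equations \eqref{eq:CK}, and then to derive the total variation contraction \eqref{eq:exposemigroup} by a Hahn--Jordan splitting of the initial measure. Let $\nu$ denote the uniform probability measure on $\T^2$, which is invariant under $(S(t))_{t\geq 0}$ since the constant function $u\equiv 1$ solves \eqref{eq:u}. The hypothesis \eqref{eq:uniformmixing} then reads $P_{t_*}(x,y,\cdot) \geq \alpha_* \nu$ and the desired conclusion \eqref{eq:expopointwise} reads $P_t(x,y,\cdot) \geq (1-Ce^{-\rho t})\nu$.

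The heart of the proof is the inductive claim
\begin{equation*}
P_{n t_*}(x,y,\cdot) \geq \bigl[1-(1-\alpha_*)^n\bigr]\,\nu \qquad \forall\, (x,y)\in\T^2,\ n\geq 1.
\end{equation*}
Writing the inductive hypothesis as $P_{nt_*}(x',y',\cdot) = [1-(1-\alpha_*)^n]\nu + r_n(x',y',\cdot)$ with $r_n \geq 0$ of total mass $(1-\alpha_*)^n$, I apply \eqref{eq:CK} and obtain two contributions. The constant-in-$(x',y')$ term $[1-(1-\alpha_*)^n]\nu$ passes through the integration against $P_{t_*}(x,y,dx'dy')$ unchanged because the latter is a probability. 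For the remainder, the minorization $P_{t_*}(x,y,dx'dy') \geq \alpha_*\,\nu(dx'dy')$ together with the invariance of $\nu$ yields
\begin{equation*}
\int_{\T^2} r_n(x',y',\cdot)\, P_{t_*}(x,y,dx'dy') \geq \alpha_*\int_{\T^2} r_n(x',y',\cdot)\,\nu(dx'dy') = \alpha_*(1-\alpha_*)^n \nu,
\end{equation*}
and adding both pieces closes the induction. The pointwise bound \eqref{eq:expopointwise} at general $t \geq 0$ then follows by writing $t = nt_* + s$ with $0\leq s < t_*$ and applying \eqref{eq:CK} once more, the stated constants being forced by $(1-\alpha_*)^n \leq (1-\alpha_*)^{t/t_* - 1} = Ce^{-\rho t}$.

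For \eqref{eq:exposemigroup}, linearity and invariance of $\nu$ reduce the claim to $\|S(t)\mu\|_{\mathcal{M}(\T^2)} \leq Ce^{-\rho t}\|\mu\|_{\mathcal{M}(\T^2)}$ when $\int_{\T^2}d\mu = 0$. Decompose $\mu = \mu_+ - \mu_-$ via Hahn--Jordan with $\mu_\pm(\T^2) = \|\mu\|_{\mathcal{M}(\T^2)}/2$, and apply \eqref{eq:expopointwise} to each: by the order-preserving property, $S(t)\mu_\pm \geq (1-Ce^{-\rho t})(\|\mu\|_{\mathcal{M}(\T^2)}/2)\,\nu$. Hence $S(t)\mu = R_+ - R_-$ with each $R_\pm \geq 0$ of mass at most $Ce^{-\rho t}\|\mu\|_{\mathcal{M}(\T^2)}/2$, and the triangle inequality concludes.

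The only nontrivial point in the whole argument is the inductive step, where invariance of $\nu$ is essential: it is precisely what converts the residual mass $(1-\alpha_*)^n$ into an additional $\alpha_*(1-\alpha_*)^n$ of minorization at the next iteration. Without this ingredient one could only propagate the bound $\alpha_*\nu$ to longer times with the same constant, and no exponential decay would follow; in our Markov setting, invariance of Lebesgue comes for free from the fact that constants solve \eqref{eq:u}.
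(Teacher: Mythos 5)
Your proof is correct. The paper does not actually include a proof of this lemma---it cites Doeblin's theorem and the references therein and uses the statement as a black box---so there is no argument in the source to compare against; what you wrote is precisely the standard Doeblin minorization iteration together with the Hahn--Jordan splitting for the total variation estimate. One small point worth being explicit about: the invariance of the uniform measure $\nu$ that you rightly flag as essential is not a consequence of the abstract hypotheses (conservative, order-preserving, minorization) alone, but holds here because $V$ depends only on $x$, so $V(x)\partial_y u = \partial_y\big(V(x)u\big)$ and \eqref{eq:u} is its own Fokker--Planck equation, making constants (hence $\nu$) stationary for $(S(t))_{t\ge 0}$.
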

\noindent Inequalities \eqref{eq:uniformmixing} and \eqref{eq:expopointwise} 
should be understood in the sense of measures, with the right-hand sides being 
uniformly distributed. They correspond therefore  to uniformly (w.r.t. to the 
source point $(x,y) \in \T^2$ and target point $(x',y') \in \T^2$) positive 
``pointwise'' lower bounds on the transition probabilities.

\smallskip\noindent
Our first sufficient condition to ensure the lower bound \eqref{eq:uniformmixing} 
and thus the exponential relaxation~\eqref{eq:exposemigroup} is: 

\begin{AP} 
There exist disjoint intervals  $I, J$ in $\T$ such $V$ is
constant both on $I$ and on $J$ with different values denoted
$V_I$ and $V_J$. The behavior of $V$ outside $I \cup J$, 
besides being bounded measurable, may be arbitrary.
\end{AP}
\noindent

\noindent We shall prove:

\begin{thm}\label{thm:plateau}
Assume that $(P)$ holds and set $\ell := \min(|I|, |J|)$.\\
Then at time 
\begin{equation}\label{def:tP}
t_P :=  \tfrac{1}{|V_I - V_J|} + \tfrac{3 + 2\ell^2}{8} 
\end{equation}
it holds
\begin{equation}\label{eq:lowerbP}
P_{t_P}(x,y,\cdot) \geq \alpha_P, \qquad \forall\, (x,y) \in \T^2,
\end{equation}
where   
$$
\alpha_P := (8\pi e)^{-\frac32}\exp(-\tfrac{\pi^2}{4}) \ell^2 
\exp(-\tfrac{\pi^2}{\ell^2|V_I - V_J|}).
$$
\end{thm}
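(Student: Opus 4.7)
The plan is to prove the pointwise lower bound \eqref{eq:lowerbP} by constructing, for every target $(x_\star, y_\star) \in \T^2$, a family of trajectories for the Brownian motion $\mathcal{X}$ whose associated $(\mathcal{X}_{t_P}, \mathcal{Y}_{t_P})$ is distributed with density at least $\alpha_P$ near $(x_\star, y_\star)$, uniformly in the starting point $(x,y) \in \T^2$. The two plateaux $I$ and $J$ are what enables control of the $\mathcal{Y}$-component: a path that spends time $\tau$ in $I$ and complementary time $\tau_* - \tau$ in $J$ accumulates a $\mathcal{Y}$-increment equal to $\tau V_I + (\tau_* - \tau) V_J = \tau_* V_J + \tau(V_I - V_J)$; with the choice $\tau_* := 1/|V_I - V_J|$, as $\tau$ varies in $[0, \tau_*]$ this increment sweeps the circle $\T$ exactly once at constant rate $|V_I - V_J|$.

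Accordingly, I would decompose the total time as $t_P = t_1 + \tau_* + t_2$ with $t_1 + t_2 = (3 + 2\ell^2)/8$, treating the middle interval $[t_1, t_1 + \tau_*]$ as the \emph{sweeping} phase and the outer intervals as \emph{diffusion} phases on $\T$. The construction is: bring $\mathcal{X}$ from $x$ to a prescribed point $x_I$ in the interior of $I$ during $[0, t_1]$ via free Brownian motion; require $\mathcal{X}$ to remain in $I$ during $[t_1, t_1 + \tau]$, then to cross and remain in $J$ during $[t_1 + \tau, t_1 + \tau_*]$, for a parameter $\tau \in [0, \tau_*]$; finally bring $\mathcal{X}$ from $J$ to $x_\star$ during $[t_1 + \tau_*, t_P]$. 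The density at $(x_\star, y_\star)$ is then estimated by combining three types of lower bound: (a) the heat kernel on $\T$ at times $t_1$ and $t_2$, yielding (after optimization of the split $t_1 + t_2 = (3+2\ell^2)/8$) the prefactors $(8\pi e)^{-3/2}$ and $\exp(-\pi^2/4)$ of $\alpha_P$; (b) the Dirichlet heat kernel on intervals of length at least $\ell$, providing bounds of the form $\ell\, \exp(-\pi^2 \tau / \ell^2)$ whose exponent comes from the bottom $\pi^2/\ell^2$ of the Dirichlet spectrum, and which for the $I$-phase of duration $\tau$ and the $J$-phase of duration $\tau_* - \tau$ combine to produce $\ell^2 \exp(-\pi^2/(\ell^2 |V_I - V_J|))$, independent of $\tau$; (c) the change of variables $\tau \mapsto \tau(V_I - V_J) + \tau_* V_J \in \T$, which on integrating over $\tau \in [0, \tau_*]$ transports Lebesgue measure $d\tau$ to Lebesgue measure on $\T$ for the $y_\star$ coordinate. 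Assembly via Chapman-Kolmogorov then yields precisely the constant $\alpha_P$ claimed.

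The central technical obstacle, in my view, is to rigorously handle the transition from $I$ to $J$ during the sweeping phase: since $I$ and $J$ are disjoint intervals on $\T$, a Brownian path cannot jump between them, so a short transition buffer must be inserted during which $V$ (being arbitrary outside $I \cup J$) contributes an unpredictable amount to $\mathcal{Y}$. The construction has to be arranged so that this buffer can be absorbed without disrupting the continuous surjective sweep $\tau \mapsto y_\star$ of $\T$, and without sacrificing the density bounds above. The sharp Dirichlet eigenvalue exponent $\pi^2/\ell^2$ sets the dominant, and essentially optimal, cost, namely the $\exp(-\pi^2/(\ell^2|V_I - V_J|))$ factor in $\alpha_P$, reflecting the fundamental cost of confining Brownian motion to a plateau for the prescribed time.
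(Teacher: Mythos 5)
Your overall strategy --- confining the Brownian motion to $I$ for a parameter time $\tau$ and to $J$ for $\tau_* - \tau$ (with $\tau_* := 1/|V_I - V_J|$), using $\tau$ as a sweeping parameter to cover $\T$ in the $y$-coordinate, bounding the Dirichlet heat kernel from below via its bottom eigenvalue $\pi^2/\ell^2$, and assembling everything via Chapman--Kolmogorov --- is indeed the structure of the paper's proof, and your accounting of where the prefactors come from (three applications of $G_{1/8}^\T$ yielding $(8\pi e)^{-3/2}$, the spectral cost $\exp(-\pi^2\tau_*/\ell^2)$, etc.) is accurate.

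However, there is a genuine gap precisely at the obstacle you flag at the end: the uncontrolled $y$-drift during the traverse between $I$ and $J$. Your proposed remedy --- inserting a \emph{short} transition buffer --- does not work as stated: shrinking the buffer time makes the probability of crossing from $I$ to $J$ within the buffer exponentially small (the intervals may lie at distance of order one on $\T$), so the density lower bound collapses as the buffer shrinks, while any fixed buffer introduces an $O(\|V\|_\infty)$ uncertainty in $y$ that your sweep has no mechanism to absorb. The paper's resolution is the key idea of the whole proof and is missing from your outline: the traverse time is held \emph{fixed} at $t_1 = 1/8$, the confinement times are parameterized as $t_0(s) = \ell^2/8 + s$ and $t_2(s) = \ell^2/8 + \tau_* - s$ with $t_0(s) + t_1 + t_2(s)$ constant, and one \emph{averages} over $s \in S := [0,\tau_*]$. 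By translation invariance of \eqref{eq:u} in $y$, the averaged traverse kernel $\frac{1}{|S|}\int_S P_{1/8}\big(x_1,\, y_0 + V_I t_0(s),\, x_2,\, y - V_J t_2(s)\big)\, ds$ reduces, via the affine change of variables $s \mapsto y'$, to the full $y$-marginal $\int_\T P_{1/8}(x_1, y_0, x_2, y')\, dy' = G_{1/8}^\T(x_1, x_2)$, which is uniformly bounded below by $\kappa_0 = \sqrt{2/(e\pi)}$. The unpredictable $y$-shift during the traverse is thus not controlled but absorbed, because the averaging already integrates over the entire $y$-circle. Without this averaging-plus-translation-invariance step your construction does not close.
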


We note that in the definition \eqref{def:tP} of $t_P$ the constant 
$(3 + 2\ell^2)/8$ is ``technical'' only and could be replaced by any 
(small) positive quantity, at the price of a lower value for $\alpha_P$. 
Instead, the constant $\frac{1}{|V_I - V_J|}$ has a clear interpretation 
as the minimal time needed for the process to spread across the whole torus 
in $y$ through  differential speed, and it is easy to show that no 
globally positive lower 
bound for $P_t$ can be derived for times $t$ shorter than this without 
additional assumptions on $V.$ More generally, defining the oscillation of $V$ by 
\begin{equation}\label{def:Osc}
{\it Osc}(V) := {\it ess\,sup}(V) - {\it ess\,inf}(V),
\end{equation}
it is clear that complete mixing requires at least a time $ t = \frac{1}{{\it Osc}(V)}.$

\smallskip

The proof of Theorem \ref{thm:plateau} is surprisingly short and 
elementary, with no spectral argument. It completely relies 
on comparison principles combined with averaging arguments
and the invariance by translation in $y$ of Equation~\eqref{eq:u}, and
to a much less extent on the explicit nature of solutions when $V$ is constant.

\medskip

\noindent Our other sufficient condition may be understood as a weak and 
local H\"ormander type condition:

\begin{AH} 
There exists an interval $I \subseteq \T$ and a constant $K \geq 1$ 
such that for any $0 < \eps < |I|$ and any interval $J \subseteq I$
of length $|J| \geq \eps$, 
$$
\eps \inf_{p,q \in \R} \int_J \big| PV(x) - px-q\big|^2\, dx \geq  
e^{-\frac{K}{\eps^2}}.
$$ 
\end{AH}
\noindent Here, $PV$ refers to any primitive of $V$ on $I$, and
the behavior of $V$ outside of $I$, besides being bounded measurable, 
may also be arbitrary. When $(H)$ holds, in particular $V$ may not  
have any plateau inside $I$; in rough terms $(H)$ requires 
that, {\it at least in a neighborhood of some point}, $V$ is 
locally nowhere too well approximated by 
a constant. A typical subclass arises when the restriction of $V$ 
to $I$ is $W^{1,1}(I)$ and satisfies a one sided inequality like 
$V' \geq \delta$ on $I$, for some $\delta > 0,$ or higher order variants. 

\medskip\noindent
When $V$ is not identically constant, and a fortiori when $(H)$ holds, the
quantity
\begin{equation}\label{def:w2-prop1}
    \omega_2(V) := \max \left\{ \int_{\T} V\varphi \ | \ \varphi \in Lip(\T,\R), |\varphi| \leq 1, 
|\varphi'|\leq 2\pi, \int_{\T}\varphi = 0\right\}
\end{equation}
is positive. We shall prove:

\begin{thm}\label{thm:Hor}
Assume that $(H)$ holds and set $\beta := 1 + 2\pi {\it Osc}(V)|I|^2.$\\
Then at time
\begin{equation}\label{def:tH}
t_H := \big( \tfrac{10\beta}{|I|\omega_2(V)}\big)^2 \big( 1 + \log(\beta) +
\tfrac{K}{|I|^2}\big) 
\end{equation}
it holds 
\begin{equation}\label{eq:lowerbH}
P_{t_H}(x,y,\cdot) \geq \alpha_H, \qquad \forall\, (x,y) \in \T^2,
\end{equation}
where 
$$
\alpha_H := \tfrac{|I|}{3} \exp(-\tfrac{\pi^2}{|I|^2} t_H).
$$
\end{thm}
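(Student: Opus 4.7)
The plan is to establish the pointwise lower bound~\eqref{eq:lowerbH} and then invoke Lemma~\ref{lem:doeblin}. Contrary to the plateau case, assumption~(H) asserts that $V$ is nowhere too close to a constant on $I$, so the averaging argument of Theorem~\ref{thm:plateau} is ruled out and a genuine spectral analysis of the transverse Fourier modes is required. First I would perform a short-time preparation step: using the boundedness of $V$ and the comparison principle against the one-dimensional heat kernel in $x$, the transition density $P_{t_0}(x_0,y_0,\cdot)$ is pointwise bounded below over a rectangle of the form $I\times(y_0+J_0)$, where $t_0$ is chosen as a small fraction of $t_H$ and $J_0\subset\T$ is an explicit interval. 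Translation invariance in $y$ and the semigroup property then reduce~\eqref{eq:lowerbH} to the analogous bound for the solution of~\eqref{eq:u} starting from an initial datum of the form $c\,\mathbbm{1}_{I\times J_0}$.

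Expanding such a solution in Fourier modes in $y$, $u(t,x,y)=\sum_{k\in\Z}u_k(t,x)\,e^{2\pi i k y}$, each coefficient satisfies the scalar one-dimensional complex-potential equation
\begin{equation*}
\partial_t u_k - \partial_{xx} u_k + 2\pi i k V(x) u_k = 0, \qquad x\in\T,
\end{equation*}
with $u_k(0,\cdot)$ proportional to $\mathbbm{1}_I$. To capture the Dirichlet-type factor $\exp(-\pi^2 t_H/|I|^2)$ built into $\alpha_H$, I would compare the nonnegative solution $u$ to its Dirichlet restriction on $I\times\T$ and work with the Fourier modes of the latter on $I$. The zero mode is then pure Dirichlet heat on $I$, whose principal eigenfunction produces a lower bound of order $|I|\exp(-\pi^2 t_H/|I|^2)$, ultimately feeding the prefactor $|I|/3$ and the exponential in $\alpha_H$. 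For $k\neq 0$ the task is to prove an $L^2$ decay estimate for $u_k(t_H)$ strong enough that, after parabolic smoothing converts $L^2$ into $L^\infty$, summing over $k\neq 0$ leaves a surplus larger than $\alpha_H$.

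I expect this spectral estimate for the nonzero modes, relying quantitatively on~(H), to be the main obstacle. A natural strategy is to set up a hypocoercivity functional of the form
\begin{equation*}
\mathcal{E}_k(t) = \|u_k\|_{L^2(I)}^2 + a\,\mathrm{Re}\,\langle \partial_x u_k,\,ik\Phi_k u_k\rangle_{L^2(I)} + b\,k^2\|\Phi_k u_k\|_{L^2(I)}^2,
\end{equation*}
where $\Phi_k$ is constructed from $PV$ minus its best affine approximation on sub-intervals of a length scale $\varepsilon_k$ adapted to $|k|$. Differentiating in time, the cross term yields a coercivity of order $|I|^{-1}\omega_2(V)^2$, which explains the scaling $\big(\tfrac{\beta}{|I|\omega_2(V)}\big)^2$ in~\eqref{def:tH}; the lower bound $e^{-K/\varepsilon^2}$ from~(H) then fixes the largest frequencies $|k|$ that can still be absorbed at scale $\varepsilon$, and after optimization in $\varepsilon_k$ dictates precisely the logarithmic factor $1+\log\beta+K/|I|^2$. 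Once exponential decay of $\mathcal{E}_k$ is established, $L^2$-to-$L^\infty$ smoothing followed by Fourier summation delivers the pointwise lower bound~\eqref{eq:lowerbH}, and Lemma~\ref{lem:doeblin} completes the proof.
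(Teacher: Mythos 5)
Your overall skeleton matches the paper: compare to the Dirichlet problem on $I\times\T$, expand in Fourier modes in $y$, bound the zero mode from below by the first Dirichlet eigenfunction of $-\partial_{xx}$ on $I$ (this is exactly how the factor $\exp(-\pi^2 t_H/|I|^2)$ and the prefactor of order $|I|$ arise), show the nonzero modes are summably small in $L^\infty(I)$ at time of order $t_H$, and then invoke Doeblin's lemma. But two of the building blocks deviate, and one of them has a genuine gap.

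The preparation step is not justified as stated. You claim a short-time \emph{pointwise} lower bound on $P_{t_0}(x_0,y_0,\cdot)$ over a rectangle $I\times(y_0+J_0)$, reducing the problem to initial data $c\,\mathbbm{1}_{I\times J_0}$. However, comparison with the $x$-heat kernel only controls the $x$-\emph{marginal} of $P_{t_0}(x_0,y_0,\cdot)$; it says nothing about absolute continuity of the joint law in $(x,y)$, and this is precisely what may fail here. Outside of $I$ the paper imposes nothing on $V$ (plateaux are allowed there, and these create atoms in $y$), and even inside $I$ assumption $(H)$ does not rule out arbitrarily slowly varying $V$, so that for short $t_0$ no uniform 2D density lower bound holds. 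The paper's own Section 5.1, with the arcsine-law example, is a warning that the $y$-marginal alone can fail to be in $L^2_{\mathrm{loc}}$. The paper circumvents this entirely: it uses the $x$-marginal (pure heat on $\T$) only to bound the \emph{zero} Fourier mode $\nu_0$ from below, keeps all nonzero modes as measures, and only obtains a pointwise lower bound on $I'\times\T$ \emph{after} these modes have decayed, via an $\mathcal{M}\to L^2\to L^\infty$ smoothing chain (\eqref{eq:regu1}, \eqref{eq:regu2}) that treats $2\pi i k V\nu_k$ as a source term.

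For the nonzero modes, you propose a hypocoercivity functional $\mathcal{E}_k$ with a weight $\Phi_k$ built from $PV$, whereas the paper uses resolvent estimates (Propositions \ref{prop:v1} and \ref{prop:v2}, an extension of D.\ Wei's bounds to the Dirichlet case) combined with an explicit Gearhart--Pr\"uss semi-group bound \eqref{eq:weiineq}. Notably, Section 5.2 of the paper reports that the authors tried the Villani/DMS/GMM hypocoercivity frameworks on exactly this equation and found that they systematically ``burn'' a derivative of $V$ per commutator, which is fatal when $V$ is only $L^\infty$. Using $PV$ rather than $V'$ in the weight is an interesting idea to sidestep this, but differentiating $\mathrm{Re}\,\langle\partial_x u_k,\,ik\Phi_k u_k\rangle$ along the flow produces a term $\langle\partial_x(iWu_k),\,ik\Phi_k u_k\rangle$ with $W=2\pi kV$ that brings out $W'$; you give no computation showing these terms can all be integrated away, and the paper's negative experience with this family of functionals makes the claim doubtful without a concrete closure argument. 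Replacing this block by the resolvent-plus-Gearhart--Pr\"uss route is exactly where the quantities $\omega_1(\eps,V)$, $\omega_2(V)$ and the two-regime split (large $|k|$ via Proposition \ref{prop:v2} and $(H)$, small $|k|$ via Proposition \ref{prop:v1}) enter, producing the specific constants in $t_H$.

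Finally, the globalization from $I'\times\T$ to all of $\T^2$ (needed to apply Lemma \ref{lem:doeblin}) is not addressed; the paper performs it by one more application of Chapman--Kolmogorov and of the heat kernel lower bound, which also accounts for the factors $\kappa_0,\kappa_1$ in $\alpha_H$.
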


Regarding the expression for $t_H$, it reflects the slightly more indirect 
method of proof and might be further from optimal than was $t_P.$ 

The proof of Theorem \ref{thm:Hor} relies on a comparison argument with 
the equivalent of \eqref{eq:u} on $I \times \T$ with Dirichlet boundary
conditions on $\partial I \times \T$, combined with spectral estimates 
(adapted from \cite{Wei21} to the Dirichlet case) providing a regularizing 
effect for the latter. The initial step hence consists in showing that sufficiently many particles
are driven to the region (here $I \times \T$) where a regularizing effect may be
expected. A somewhat related strategy was used in \cite{DoKoXuZh24} 
(see also \cite{FonMis24}) for a voltage-conductance model in neuroscience, 
which features complicated boundary 
conditions and a velocity field $V(x,y)$ depending both on $x,y$ and satisfying 
strong hypoellipticity conditions.

\medskip

As a consequence of Lemma \ref{lem:doeblin} and Theorem \ref{thm:plateau} and 
\ref{thm:Hor} therefore, if either $(P)$ or
$(H)$ holds then Equation \eqref{eq:u} induces exponential relaxation 
in $\mathcal{M}(\T^2)$ with explicit rates. Together with the weak contraction properties 
and interpolation, this also yields exponential relaxation in 
$L^p(\T^2)$, for initial data in $L^p(\T^2)$, for arbitrary $1 \leq p < \infty$ 
and with computable lower bounds on the rates.

\smallskip

Although the alternative $(P)$ or $(H)$ encompasses a large class of functions $V$, 
in particular all those that are piecewise $\mathcal{C}^1$ on $\T$, and much
more, it doesn't cover all bounded measurable functions.  
We leave open the question whether exponential relaxation always occurs  
even for measure initial data, as soon as $V \in L^\infty(\T)$ is not 
identically constant. 
A typical example of $V$ which does not satisfy any of $(H)$ or $(P)$, and which
might be a good candidate for further investigations, is given by the following. Write
any $x \in [0,1)$ in binary form as
$$
x = \sum_{k \geq 1} b_k 2^{-k}, \qquad b_k \in \{0, 1\},
$$
(uniquely avoiding infinite trailing ends of ones) and then set
$$
V(x) := \sum_{k \geq 1} a_k(-1)^{b_k},
$$
where $(a_k)_{k \geq 1}$ is a positive sequence converging ``extremely'' 
fast to zero. By construction it has no plateau so in particular it doesn't
satisfy $(P)$; yet it gets flatter and flatter ``locally everywhere'' at 
smaller scales, as given by the decay of $(a_k)_{k\geq 1}$,
so that it doesn't satisfy assumption $(H)$ if the latter 
decay is sufficiently strong. 

\medskip

As mentioned already, in the context of $L^2(\T^2)$ initial data
the non constancy of $V$ is a sufficient condition for 
exponential relaxation. We indeed have: 

\begin{prop}\label{prop:L2}
Let $V \in L^\infty(\T,\R)$ be non constant, and let $\omega_2(V) > 0$ as 
defined in \eqref{def:w2-prop1}. Then for any $u_0 \in L^2(\T^2)$ and any $t \geq 0$, 
\begin{equation}\label{eq:sgbound}
\big\|S(t)u_0 - \int_{\T^2}u_0\big\|_{L^2(\T^2)} \leq e^{\frac{\pi}{2} - \rho(V) t}
\big\|u_0 - \int_{\T^2}u_0\big\|_{L^2(\T^2)},
\end{equation}
where the mixing rate lower bound $\rho(V)$ is given by
\begin{equation}\label{eq:mixingcst}
\rho(V) := \left(\tfrac{\omega_2(V)}{2\pi(1 + Osc(V))}\right)^2.
\end{equation}
\end{prop}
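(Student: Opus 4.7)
My plan is to expand in Fourier in $y$ and, on each frequency, run a hypocoercive modified-energy estimate à la Villani, with an optimizer of \eqref{def:w2-prop1} as corrector. The shift $y \mapsto y - \big(\int_\T V\big) t$ preserves the $L^2(\T^2)$ norm, $Osc(V)$ and (since the admissible $\varphi$'s have zero mean) also $\omega_2(V)$, so I may assume $\int_\T V = 0$ and hence $\|V\|_{L^\infty} \le Osc(V)$. Writing $u(t,x,y) = \sum_{k \in \Z} u_k(t,x)\, e^{2\pi i k y}$, the $k=0$ mode solves the pure heat equation on $\T$, so its zero-mean part decays at rate $4\pi^2 \ge \rho(V)$ (note $\rho(V) < (2\pi)^{-2}$ since $\omega_2(V)\le Osc(V)$). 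It therefore suffices to prove, with $\rho(V)$ independent of $k$,
\begin{equation*}
\|u_k(t)\|_{L^2(\T)}^2 \;\le\; e^{\pi - 2\rho(V)\, t}\,\|u_k(0)\|_{L^2(\T)}^2 \qquad \forall\, k \ne 0,
\end{equation*}
and then conclude via Parseval.

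Fix $k \ne 0$, set $f := u_k$, $c(t) := \int_\T f$ and $g := f - c$. From $\partial_t f = \partial_{xx} f - 2\pi i k V f$ one deduces $\dot c = -2\pi i k \int V g$ and the standard identity $\tfrac{d}{dt}\|f\|^2 = -2\|\partial_x g\|^2$. Pick an optimizer $\varphi$ in \eqref{def:w2-prop1}, so $\varphi \in Lip(\T,\R)$, $|\varphi|\le 1$, $|\varphi'|\le 2\pi$, $\int \varphi = 0$ and $\int V\varphi = \omega_2(V)$. Introduce the modified energy
\begin{equation*}
\mathcal{E}(t) := \|f(t)\|_{L^2}^2 \;+\; \frac{\eta\,\mathrm{sgn}(k)}{2\pi|k|}\,\mathrm{Im}\!\left(\overline{c(t)}\int_\T \varphi(x)\,g(t,x)\,dx\right),
\end{equation*}
with a small parameter $\eta > 0$ to be chosen. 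Since $|k| \ge 1$, the corrector is dominated by $\tfrac{\eta}{2\pi}\|f\|^2$, so for $\eta \le \pi$ one has $\tfrac12\|f\|^2 \le \mathcal{E} \le \tfrac32\|f\|^2$.

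Computing $\dot{\mathcal{E}}$ via one integration by parts in $x$ (licit since $\varphi \in W^{1,\infty}$ and $g(t) \in H^1$ for $t > 0$ by analyticity of the $x$-semigroup), the imaginary part of $\dot{\bar c}\int \varphi g + \bar c\int \varphi\,\dot g$ produces, besides the dissipation $-2\|\partial_x g\|^2$, a ``good'' coercive term $-\eta\,\omega_2(V)\,|c(t)|^2$, coming directly from the factor $\int V \varphi = \omega_2(V)$ in $\partial_t g$. It is accompanied by cross errors of types $\eta|c|\,\|\partial_x g\|$, $\eta\|V\|_{L^\infty}\,\|g\|^2$ and $\eta\|V\|_{L^\infty}\,|c|\,\|g\|$. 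Applying Young's inequality and the Poincaré bound $\|g\|^2 \le (2\pi)^{-2}\|\partial_x g\|^2$ on zero-mean functions of $\T$, every error is absorbed into $\tfrac12\|\partial_x g\|^2 + \tfrac12\,\eta\,\omega_2\,|c|^2$ provided $\eta$ is taken of order $\omega_2(V)/(1+Osc(V))^2$. One then obtains $\dot{\mathcal{E}} \le -2\rho(V)\,\mathcal{E}$, uniformly in $k \ne 0$, and integration together with $\mathcal{E}\asymp\|f\|^2$ yields the per-mode bound with prefactor $e^{\pi/2}$.

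The genuinely delicate point is the tuning of $\eta$: the worst error, of the form $\eta\|V\|_{L^\infty}^2/\omega_2(V)\,\|g\|^2$, arises when one pairs $V$ against itself through the corrector and then completes by Young, and it must be dominated by Poincaré's $(2\pi)^{-2}\|\partial_x g\|^2$. That constraint forces $\eta \lesssim \omega_2(V)/(1+Osc(V))^2$ and thereby produces precisely the rate $\rho(V) = \big(\omega_2(V)/[2\pi(1+Osc(V))]\big)^2$ of the statement. The fact that $V$ is merely $L^\infty$ poses no difficulty, since the Lipschitz corrector $\varphi$ is never differentiated against $V$.
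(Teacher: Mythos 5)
Your proposal is correct in outline and takes a genuinely different route from the paper. The paper also Fourier-decomposes in $y$ and reduces to the family of $1$D operators $A_k = -\partial_{xx} + 2\pi i k V$, but for each $k \ne 0$ it then proves a \emph{resolvent} lower bound $r(0, 2\pi k V) \ge \rho(V)$ (this is the periodic case of Proposition \ref{prop:v1}, using the maximizer of $\omega_2$ as a test function against the stationary equation $(A_k - is)u = f$), and finally converts that uniform resolvent estimate into the semi-group decay via Wei's explicit Gearhart--Pr\"uss type theorem for m-accretive operators, which is where the sharp prefactor $e^{\pi/2}$ comes from. You instead run a time-domain hypocoercive estimate on each mode with the modified energy $\mathcal{E} = \|f\|^2 + \tfrac{\eta\,\mathrm{sgn}(k)}{2\pi|k|}\mathrm{Im}\bigl(\bar c\int\varphi g\bigr)$, built from the same maximizer $\varphi$. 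Your bookkeeping of the three cross errors is correct, the good term $-\eta\,\omega_2\,|c|^2$ does arise from $\int V\varphi = \omega_2(V)$, the $1/|k|$ normalisation makes every term uniform in $k\ge 1$, and the binding constraint on $\eta$ is indeed the one you identify (error (C) after Young, controlled by Poincar\'e), giving $\eta \sim \omega_2/(1+Osc(V))^2$ and hence rate $\rho(V)$. The approaches are morally cousins --- both ultimately pair the equation against the $\omega_2$-optimizer --- but yours sidesteps the Gearhart--Pr\"uss step entirely, at the price of a somewhat messier constant bookkeeping (the prefactor you obtain is whatever comes from $\mathcal{E} \asymp \|f\|^2$, not $e^{\pi/2}$ exactly; since it is \emph{smaller} than $e^{\pi/2}$ for your choice $\eta\le\pi$, that is harmless for \eqref{eq:sgbound}). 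One structural advantage of the paper's resolvent route is that Proposition~\ref{prop:v1} is stated simultaneously for Dirichlet boundary conditions and is reused in the proof of Theorem~\ref{thm:Hor}; your hypocoercivity argument as written is tied to the periodic case where $e_1$ is constant and $c = \int f$ is the orthogonal projection, so it would need nontrivial adaptation (weights $e_1^2$, a boundary-compatible corrector) to cover the Dirichlet setting. Finally, one small phrasing issue: the dissipation $-2\|\partial_x g\|^2$ comes from $\tfrac{d}{dt}\|f\|^2$, not from the corrector's derivative as your sentence suggests, but the computation you describe is the right one.
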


Our proof of Theorem \ref{thm:Hor} is actually partly based on a 
variant of Proposition~\ref{prop:L2} adapted to the Dirichlet 
setting (see Proposition \ref{prop:v1} in Section \ref{sect:spectral}). 
The explicit lower bound \eqref{eq:mixingcst} for the mixing rate $\rho(V)$  
is not claimed to be sharp. As a matter of fact, a result very similar 
to Proposition \ref{prop:L2} was first obtained\footnote{Or at least follows 
immediately from \cite{Wei21}, as the author was mostly interested in 
the enhanced dissipation regime.} by 
D. Wei \cite{Wei21}. The lower bound expression obtained in \cite{Wei21} for the rate
would write   
$$
\tilde \rho(V) := \left(\Phi^{-1}(\omega_1(V))\right)^2, 
$$
where $\Phi(s) := 144s\tan(2s)$ for $s \in [0,+\infty)$ and     
$$
\omega_1(V) := \inf_{p,q \in \R} \int_0^1 \big|PV(x) - px - q\big|^2\, dx.
$$
That mixing rate is not claimed to be sharp either, 
both $\rho(V)$ and $\tilde \rho(V)$ behave somewhat 
similarly and measure a discrepancy for $V$ from 
being identically constant, their different values only reflect
slightly different strategies of proofs, which both rely 
on spectral arguments 
(uniform resolvent estimates for the operators 
$-\partial_{xx} + 2i\pi kV$ in 1D) after 
decomposing $u$ in the Fourier sectors in $y$ as 
$$
u(t, x, y) = \sum_{k \in \Z} \hat u_k(t,x) e^{2i\pi ky}.
$$
The fixed upper bound $e^{\frac{\pi}{2}}$ on the transient growth factor in 
\eqref{eq:sgbound} is derived from an explicit Gearhart-Pr\"uss type 
theorem for m-dissipative operators in Hilbert spaces also proved in 
\cite{Wei21}, and improving on earlier results of Helffer and Sj\"ostrand 
\cite{HelSjo10} (see also \cite{HelSjo21}).

\medskip

We finish this results section by mentioning the following easy shortcut 
that provides long-time mixing for $L^1(\T^2)$ initial data and arbitrary 
non constant $V$. Yet, it doesn't provide any rate, and also does not 
apply to Radon measures. For $u_0 \in L^1(\T^2)$ and $n \geq 0$, 
decompose  
$$
u_0 = u_{0,n} + v_n \ \text{ where } \
u_{0,n} := \max\left(-n, \min\left(u_0, n\right)\right), 
$$
so that by linearity
$$
S(t)u_0 - \int_{\T^2}u_0 = S(t)u_{0,n} - \int_{\T^2}u_{0,n} + S(t)v_n -
\int_{\T^2} v_n.
$$
Applying Proposition \ref{prop:L2} for $u_{0,n} \in L^2(\T^2)$ and the semi-group 
contraction property for $v_n \in L^1(\T^2)$, we obtain
$$
\Big\|S(t)u_0 - \int_{\T^2}u_0\Big\|_{L^1(\T^2)} \leq e^{\frac{\pi}{2} -
\rho(V)t}\Big\|u_{0,n} -
\int_{\T^2} u_{0,n}\Big\|_{L^2(\T^2)} + \Big\|v_n -
\int_{\T^2}v_n\Big\|_{L^1(\T^2)},
$$
from which it easily follows by taking first $n$ sufficiently large, and then
$t$ sufficiently large given $n$, that 
$$
\big\|S(t)u_0 - \int_{\T^2}u_0\big\|_{L^1(\T^2)} \to 0 \quad \text{ as } \ t \to
+\infty.
$$

\medskip

To close this introduction, let us mention that important contributions to 
the question of relaxation to equilibrium for problems involving the 
interplay between a symmetric (diffusive) part, and a skew-symmetric 
(conservative) one, have been obtained by many authors in the past, both in linear 
and nonlinear contexts much more involved than~\eqref{eq:u}.  
Kinetic theory has certainly played a major role, 
ranging from the linear Kolmogorov, BGK or Fokker-Planck equations, to the
nonlinear Vlasov-Poisson and Boltzmann equations. It is not our aim to try
to account for these faithfully, and we refer instead to literature 
reviews as can be found in, e.g., \cite{Herau18} and \cite{BeFaLeSt22}. The specificity  
of kinetic models is that the corresponding (free) transport field is the smooth 
and non degenerate $V(x) = x$ (we should 
actually write $v$ instead of $x$ here since it corresponds to a velocity variable). 
Recent contributions (see e.g. \cite{GoImMoVa19, HeHeHuMo22, AnHeGuLoMoRe24}) 
have considered non-smooth or degenerate coefficients, 
but therefore on the diffusive part rather than on the transport one.  
Fluid mechanics, as already mentioned, has also been an important contributor, 
including the study of long time relaxation of vortices, enhanced dissipation, 
and the understanding of the transition threshold to instabilities for perturbed  
stationary flows in the context of the Navier-Stokes equation. About these, we refer to 
the nice expository paper \cite{Gallay21} (in french) after \cite{LiWeiZha20}, the references therein,  
and to the impressive memoir \cite{CheWeiZha24}. 
Our modest personal initial motivation to these questions instead came from models 
in neuroscience, where the 
diffusion naturally arises only for some of the variables, and we settled
first on the model problem \eqref{eq:u} for its simplicity. 
We hope that any progress related to it might provide insights into the fine 
structure of mixing mechanisms for other degenerate parabolic-transport systems. 
We conjecture in particular that for operators of the form $X_0 + \sum_{i=1}^r X_i^*X_i$ 
on manifolds the minimal time for mixing (in the sense of obtaining uniform pointwise 
positive lower bounds on $P_t$) should be equal to the minimal control time of system 
\eqref{eq:controlsys} with the additional constraint that $\xi_0 \equiv 1$ (in other 
words, for which diffusion directions can be followed at arbitrary large speed, 
while transport directions are followed at unit speed), the required level of 
regularity for the $X_i$ is still unclear though; see also Subsection \ref{sect:hj}.

\medskip

The plan of the paper is as follows: Section \ref{sect:plateau} is devoted to the 
proof of Theorem \ref{thm:plateau}, which is self-contained. 
In Section \ref{sect:spectral} we derive spectral estimates for the 1D linear 
operators associated to Equation~\eqref{eq:u} 
after decomposition into Fourier modes in $y$. These allow us in particular to 
present the proof of Proposition \ref{prop:L2}. Section \ref{sect:Hor} is devoted to
the proof of Theorem \ref{thm:Hor}, which combines elements in Section
\ref{sect:spectral} with comparison and one sided smoothing estimates.  Finally, in
Section \ref{sec:fourretout} we gather a number of aside remarks:\\
\indent {\it i)} On the 
(non) smoothing properties of \eqref{eq:u} in case of plateaux and/or jumps, in
particular in regard to the explicit example by Lévy \cite{Levy39} of the so-called 
arcsine law for the time spent by a Brownian motion in a half-line.\\
\indent {\it ii)} On the
applicability of other existing strategies to our problem, in particular 
the use of Lyapunov type functionals constructed as perturbations of the natural
norms with the help of well chosen commutators, similar to the ones involved in 
the subelliptic estimates in the hypoelliptic case. These methods have been associated with 
the name of {\it hypocoercity}, and have been successful in an number of 
contexts.\\
\indent{\it iii)} 
The account of a completely different approach to mixing, through an optimal 
control problem partly related to \eqref{eq:controlsys}, that seemed promising 
an yields an alternative construction of fundamental solutions for the 
Kolmogorov equation, but which we didn't manage to push all the way through in 
the case of Equation~\eqref{eq:u} for general $V$'s.

\numberwithin{equation}{section}

\section{At least two plateaux}\label{sect:plateau}

The purpose of this section is to present the proof of Theorem
\ref{thm:plateau}, it is self-contained.

On any plateau of $V$, that is any positive length interval $I \subset \T$ on 
which $V(x) \equiv V_I \in \R$ 
is a constant, Equation~\eqref{eq:u} reduces to a heat equation in the $x$ variable 
with a constant drift in the $y$ variable, i.e. 
\begin{equation}
    \partial_tu-\partial_{xx}u+V_I\partial_yu=0,\qquad x\in I,y\in\mathbb{T}.
\end{equation}
Using the comparison principle between the  
original equation \eqref{eq:u} on $\T^2$ and its restriction to $I \times \T$ 
with homogeneous Dirichlet boundary conditions on $\partial I \times \T$ (or
equivalently the process \eqref{eq:sp} and the corresponding one where 
trajectories exiting the strip $I \times \T$ are killed), it
follows that the transition probabilities defined in \eqref{eq:tp}
satisfy\footnote{
Although $P_t(x, y, \cdot)$ need not be absolutely continuous with respect
to the Lebesgue measure on $\T^2$ even for $t > 0$ (this is indeed the case 
when $V$ has a plateau), for the ease of notation, especially of 
multi-steps Chapman-Kolmogorov equations, we shall sometimes {\it denote} 
it by $P_t(x,y, x',y')$ or $P_t(x,y,x',y') dx'dy'$ 
in the sequel when this does not lead to a confusion.}
\begin{equation}\label{eq:compare}
P_t(x,y,x',y') dx'dy' \geq \delta_{y'= y + V_It}
\mathbbm{1}_{I}(x)\mathbbm{1}_{I}(x') G_t^I(x, x')dx',
\end{equation}
where $G_t^I$ refers to the fundamental
solution for the 1D heat equation on $I$ with homogeneous 
Dirichlet boundary conditions on $\partial I.$ 

From standard regularity theory (Harnack inequality) and a scaling argument to reduce 
to the case of a unit interval, we also have regarding $G_t^I$ the classical estimate
\begin{equation}\label{eq:greenbound1d}
G_t^I(x, x') \geq \frac{c}{|I|}e^{-\frac{\pi^2t}{|I|^2}} \qquad \forall\, t \geq
\frac{|I|^2}{8}, \forall\, x, x' \in I',
\end{equation}
where $I' = \{x \in I \text{ s.t. } d(x,I^c) \geq |I|/4\}$ and 
$
c \geq 1 - 2 \sum_{k \geq 2} e^{-\frac{\pi^2}{8}(k^2-1)} \geq \frac12,
$
the lower bound for $c$ being easily established through a decomposition 
into spectral basis. The main point here is to establish a uniformly positive bound on $I'$.

\medskip

In the sequel of this section we assume that $V$ possesses at least two plateaux
$I$ and $J$ at different heights $V_I \neq V_J$. Without loss of generality
we assume for convenience below that $|I| = |J| =: \ell,$ but
this could easily be removed at the price of slightly longer expressions 
in some bounds.

\medskip

For arbitrary $T = t_0 + t_1 + t_2$, iterated use of the Chapman-Kolmogorov equation \eqref{eq:CK} yields
\begin{equation}\label{eq:itersk}
P_T(x_0,y_0, \cdot)
= \iint P_{t_0}(x_0, y_0, dx_1dy_1)P_{t_1}(x_1, y_1, dx_2dy_2)P_{t_2}(x_2, y_2,
\cdot).
\end{equation}
Starting from $(x_0,y_0)$ with $x_0 \in I'$, we shall only consider 
process trajectories that stay in the strip $I \times \T$ up to time $t_0$, where they arrive at 
some $(x_1,y_1)$ with $x_1 \in I'$, then traverse to some $(x_2, y_2)$ with 
$y_2 \in J'$ in a $t_1$ time step, and finally stay in the strip $J \times \T$ for an
additional $t_2$ time step to reach some $(x,y)$ at time $T$ with $x \in J'$. Here 
$J' = \{x \in J \text{ s.t. } d(x,J^c) \geq |J|/4\}$ is defined similarly as~$I'$. In other 
words, provided $t_0, t_2 \geq \frac{\ell^2}{8}$ we make use of \eqref{eq:compare} and 
\eqref{eq:greenbound1d} to bound 
$P_{t_0}$ and $P_{t_2}$ in \eqref{eq:itersk}. This yields 
\begin{equation}\label{eq:salade}
P_T(x_0,y_0,x,y)\\ 
\geq \frac{1}{4\ell^2}e^{-(\frac{t_0 + t_2}{\ell^2})\pi^2}
 d\mu(y)  \mathbbm{1}_{I'}(x_0) \mathbbm{1}_{J'}(x) 
\end{equation}
where\footnote{Note that although $P_t(x,y, \cdot)$ is barely a Radon measure, 
expressions like $P_t(x,y, x', y' + a)$ are perfectly meaningful 
and simply refer to the push forward of $P_t(x, y, \cdot)$ by a translation 
by $a$ in the $y'$ direction.}
$$
d\mu(y) = \int_{I' \times J'}
P_{t_1}(x_1, y_0 + V_I t_0, x_2, y - V_J t_2)\, dx_1 dx_2
$$
actually depends on $y_0$, $t_0$, $t_1$ and $t_2$.

The apparent difficulty to
obtain a pointwise lower a bound on $d\mu(y)$ (which might only be a measure at
this stage) stems from the loss of control of the process position in the 
$y$ direction during the traverse between the two strips mentioned above, since
we do not control $V$ there. In order to obtain such a bound, we shall use an averaging
procedure together with invariance by translation in $y$ of Equation~\eqref{eq:u}.

More precisely, for a given arbitrary $s \in S :=[0, \frac{1}{|V_I - V_J|}]$ we set 
$$
t_0(s) = \frac{|I|^2}{8} + s, \quad t_1 = \frac18, \quad  t_2(s) =
\frac{|J|^2}{8} + \frac{1}{|V_I -
V_J|} - s, 
$$ 
so that $T = t_0(s) + t_1 + t_2(s)$ does not depend on $s$. By construction 
$t_0(s),t_2(s)\geq \frac{\ell^2}{8}$ and thus \eqref{eq:salade} holds for every 
$s\in S$ with $d\mu(y)$ depending on $s$. Averaging of
\eqref{eq:salade} for $s$ in $S$  then leads to 
\begin{equation}\label{eq:salade2}
P_T(x_0,y_0,x,y)\\ 
	\geq \frac{1}{4\ell^2}e^{-\frac{\pi^2}{\ell^2}(T-\frac{1}{8})}
 d\nu(y)  \mathbbm{1}_{I'}(x_0) \mathbbm{1}_{J'}(x)\, dx 
\end{equation}
where
$$
d\nu(y) = \int_{I' \times J'}\int_S \frac{1}{|S|}
P_{\frac18}\Big(x_1, y_0 + V_It_0(s), x_2, y - V_Jt_2(s)\Big)\, ds dx_1 dx_2.
$$
By invariance by translation of Equation~\eqref{eq:u} in $y$ and an affine change of
variable, we compute that
\begin{equation}\label{eq:dnu}\begin{split}
  &\frac{1}{|S|} \int_S P_{\frac18}\big(x_1, y_0+ V_It_0(s), x_2, y -V_Jt_2(s)\big)\, ds\\
=\ &\frac{1}{|S|} \int_S P_{\frac18}\Big(x_1, y_0, x_2, y -(V_I + V_J)\ell^2 - \tfrac{V_J}{|V_I -
V_J|} + (V_J- V_I)s\Big)\, ds\\
=\ & \int_\T P_{\frac18}(x_1, y_0, x_2, y')\, dy',   
\end{split}\end{equation}
and in particular that $d\nu$ is a uniform distribution in $y$ on $\T$ (it does not depend
on $y_0$ either). In the change of variable we use crucially $V_J\neq V_I$ and 
$|S|=\frac{1}{|V_I-V_J|}$. Besides, since integration of Equation~\eqref{eq:u} in the $y$ 
variable yields the heat equation on $\T$, 
$$
\int_\T P_{t}(x_1, y_0, x_2, y')\, dy' = G_t^\T(x_1, x_2)
$$
where $G_t^\T(\cdot,\cdot)$ refers to the fundamental solution of 
the 1D heat equation on $\T$. Observe that  
\begin{equation}\label{eq:borneGtore}
\kappa_0 := \inf_{x,x' \in \T} 
	G_\frac18^\T(x,x') \geq G_\frac18(\tfrac 12) = \sqrt{\tfrac{2}{e\pi}},
\end{equation}
where
$$
G_t(x) := \frac{1}{\sqrt{4\pi t}} e^{-\frac{x^2}{4t}}
$$
refers to the fundamental solution of the 1D heat equation
on $\R,$ and the choice $t = \frac{1}{8}$ was made because it optimizes the
resulting lower bound $\kappa_0$.  
Gathering all information obtained so far, we have thus obtained 
\begin{equation}\label{eq:saladecuite}
P_T(x_0, y_0, x,y) \geq \frac{\kappa_0}{16}
	e^{-\frac{\pi^2}{\ell^2}(T-\frac{1}{8})}
\mathbbm{1}_{J'}(x), \quad \forall\, x_0 \in I', \ \forall\, y_0 \in \T.
\end{equation}

In order to apply the Doeblin argument of Lemma \ref{lem:doeblin}, we still 
wish to remove the restriction
on $x_0$ in the previous inequality. For that purpose, we simply use again
the Chapman-Kolmogorov equation \eqref{eq:CK} and estimate
\begin{equation*}\begin{split}
P_{T+\frac18}(x_0, y_0, x,y) &=  \int P_\frac18(x_0,y_0, dx_1dy_1)P_T(x_1,y_1,x,y)\\
&\geq \frac{\kappa_0}{16} e^{-\frac{\pi^2}{\ell^2}(T-\frac{1}{8})}
P_\frac18(x_0,y_0, I' \times \T) \mathbbm{1}_{J'}(x) 
\end{split}\end{equation*}
with 
$$
P_\frac18(x_0,y_0, I' \times \T) = \int_{I'} G_\frac18^\T(x_0, x')\, dx' \geq
\kappa_0|I'|,
$$
which now yields a lower bound independent of $(x_0, y_0)$. From the stochastic process 
point of view, starting from any position $x_0\in\mathbb{T}$ there is a positive 
probability that at time $t_3=\frac{1}{8}$ the Wiener process is inside $I'$.

We can finally globalize our lower bound (i.e. remove the restriction $x \in J'$) by
a last application of the Chapman-Kolmogorov formula,
\begin{equation*}\begin{split}
P_{T+\frac14}(x_0,y_0,x,y) &= \int P_{T+\frac18}(x_0,y_0, dx_1dy_1)P_\frac18(x_1,y_1,x,y)\\
	&\geq \frac{\kappa_0^2}{32} \ell e^{-\frac{\pi^2}{\ell^2}(T-\frac18)}\int_{J'\times
\T}P_\frac18(x_1,y_1,x,y) dx_1dy_1\\
	& =\frac{\kappa_0^2}{32} \ell e^{-\frac{\pi^2}{\ell^2}(T-\frac18)}
	\int_{J'}G_\frac18^\T(x_1,x)\, dx_1 \geq \frac{\kappa_0^3}{64}\ell^2
	e^{-\frac{\pi^2}{\ell^2}(T-\frac18)}
\end{split}\end{equation*}
where here we have used the fact that integration in the $y$ direction for the
initial data (i.e. here in $y_1$) also reduces to the one dimensional heat 
equation on $\T.$ With the definition $t_P := T + \frac14$, this yields  
$$
P_{t_P}(x_0,y_0,x,y) \geq \frac{\kappa_0^3}{64}\ell^2 e^{-\frac{\pi^2}{4}}
e^{-\frac{\pi^2}{\ell^2} \frac{1}{|V_I - V_J|}},
$$
which is precisely \eqref{eq:lowerbP}. This concludes the proof of 
Theorem \ref{thm:plateau}.\qed

\section{Resolvent and semi-group estimates in 1D}\label{sect:spectral}

Let $I = [a,b] \subset \R$ be a bounded interval and $V \in L^\infty(I, \R).$\\
We denote by $A$ the linear operator
$$
A u := -\partial_{xx} u + iV(x) u
$$ 
on $D(A) \subset L^2(I)$ where either  
$$
D(A) := \Big\{ u \in H^2(I, \C) \text{ s.t. } u(a) = u(b), u'(a) = u'(b) \Big\}
$$
(periodic boundary conditions), or 
$$
D(A) := \Big\{ u \in H^2(I, \C) \text{ s.t. } u(a) = u(b) = 0 \Big\}
$$
(homogeneous Dirichlet conditions).\\
In both cases, we denote by $(\lambda_i)_{i \geq 1}$ the increasing 
sequence of real eigenvalues for the {\it Laplace} operator on $I$ 
with the same boundary conditions. We also denote by $e_1$ the  
real-valued $L^2$-normalized eigen-function associated to $\lambda_1$, i.e.
\begin{equation}\label{def:eigenf}
    e_1(x) =|I|^{-1/2} \left\{ 
\begin{array}{rl}
1 & (\text{periodic case}),\\
\sqrt{2}\sin(\pi \frac{x-a}{b-a}) & (\text{Dirichlet case}).
\end{array}
\right.
\end{equation}
For any $u \in D(A)$ and any $z \in \C$ 
with ${\rm Re}(z) < \lambda_1$, we have the immediate estimate
$$
{\rm Re}\left( \langle Au - zu, u \rangle \right) = \int_I |\partial_x u|^2 
- {\rm Re}(z) |u|^2 \,dx \geq \big(\lambda_1 -
{\rm Re}(z)\big) \|u\|^2. 
$$ As a consequence the left open half-plane $\{ {\rm Re}(z) < \lambda_1\}$ is contained 
in the resolvent set of $A$ and 
$$\|(A - z)^{-1}\| \leq \frac{1}{\lambda_1 - {\rm Re}(z)} \quad \text{  
for all } {\rm Re}(z) < \lambda_1,$$
that is $A - \lambda_1$ is m-accretive.

\medskip
Let  
\begin{equation}\label{eq:rlambda1}
r(\lambda_1) := \inf \Big\{ \|(A - z)u\|,\ u \in D(A),\ \|u\|=1,\ {\rm Re}(z) =
\lambda_1\Big\}
\end{equation}
[in result statements we shall write it $r(\lambda_1, V)$ to stress its dependence on
$V$].\\[2pt]
If $r(\lambda_1) > 0$,  
it follows from standard properties of resolvents that the left open half-space 
$\{{\rm Re}(z) < \lambda_1 + r(\lambda_1)\}$ is fully included in the resolvent 
set of $A$ and that 
\begin{equation}
\frac{1}{r(\lambda_1)} = \sup_{{\rm Re}(z) = \lambda_1} \|(A - z)^{-1}\|. 
\end{equation}Besides, since $A - \lambda_1$ is m-accretive, D. Wei's explicit semi-group 
estimate \cite{Wei21} applies and we have the semi-group bound
\begin{equation}\label{eq:weiineq}
\|e^{-tA}\| \leq e^{\frac{\pi}{2} - (\lambda_1 + r(\lambda_1))t}, \qquad \forall\,
t \geq 0.
\end{equation}

Our aim in this section is to show that $r(\lambda_1) > 0$   
whenever $V$ is not identically constant (this condition is 
also clearly necessary). More importantly, we are interested 
in explicit lower bounds in terms of $V$. We stress that our results apply both to the 
periodic case and the Dirichlet case, and we present them simultaneously.

\medskip

For our first result, we consider the space
\begin{equation}\label{def:L}
    L := \left\{ \varphi \in Lip(I,\R),\ \int_I\varphi e_1^2 = 0, \ \varphi e_1(a) =
\varphi
e_1(b) \right\}
\end{equation}
endowed with the norm
$$
\|\varphi\|_L := \max\left(\|\varphi\|_\infty, \frac{|I|}{2\pi}\, \|\varphi'\|_\infty\right).
$$
Recall that $e_1$ is the first eigenfunction defined in \eqref{def:eigenf}. 
In the periodic case, $L$ therefore consists of periodic Lipschitz functions whose 
integrals on $I$ vanish. In the Dirichlet case, $L$ just consists of Lipschitz 
functions that are orthogonal to $e^2_1$ with respect to the usual $L^2$ inner 
product, since in this case $e_1(a)=e_1(b)=0$ and thus the second constraint in 
\eqref{def:L} is automatically satisfied.
 
We define the quantity\footnote{The unit ball of $L$ being compact in, e.g.,
$L^\infty(I)$, the maximum is clearly achieved.}
\begin{equation}\label{eq:defomega2}
	\omega_2(V) := \max \left\{\int_I V\varphi e_1^2,\ \varphi \in L,
	\|\varphi\|_L \leq 1\right\}.
\end{equation}

\begin{prop}\label{prop:v1} 
Let $V \in L^\infty(I,\R)$ be non constant. Then $\omega_2(V) > 0$ and we have  
\begin{equation}\label{eq:prop1}
r(\lambda_1, V) \geq \frac{\omega_2(V)^2}{18} 
\left( \frac{\pi^2}{|I|^2} + \frac{|I|^2}{\pi^2} {\it Osc}(V)^2\right)^{-1} > 0,
\end{equation}
where ${\it Osc}(V) := {\it ess\,sup}(V) - {\it ess\,inf}(V)$.
\end{prop}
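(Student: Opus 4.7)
The plan is to follow a test-function / resolvent estimate strategy, in the spirit of D.\ Wei \cite{Wei21} and Helffer--Sj\"ostrand \cite{HelSjo10}. Fix $\tau \in \mathbb{R}$, $u \in D(A)$ with $\|u\|=1$, and set $z = \lambda_1 + i\tau$, $f := (A - z)u$; the goal is a uniform lower bound on $\|f\|$. Since every $\varphi \in L$ is orthogonal to $1$ in $L^2(I, e_1^2\,dx)$, the quantity $\omega_2(V)$ is invariant under shifting $V$ by a constant (with the matching shift of $z$), so we may assume WLOG $\|V\|_\infty \le {\it Osc}(V)/2$. Positivity $\omega_2(V)>0$ follows by duality: if $\omega_2(V)=0$, then subtracting its $e_1^2$-weighted mean from any Lipschitz function (with the appropriate b.c.) produces an element of $L$, forcing $(V-\bar V_*)e_1^2\equiv 0$ by density, hence $V$ a.e.\ constant---a contradiction.

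The technical core relies on the ground-state substitution $u = v\,e_1$, which recasts $(A - z)u = f$ as $A_v v + i(V - \tau) v = f/e_1$ with $A_v := -\partial_x^2 - 2(e_1'/e_1)\partial_x$ self-adjoint on the weighted space $L^2(I, e_1^2\,dx)$ (norm $\|\cdot\|_*$), with kernel $\mathbb{R}$ and first positive eigenvalue $\delta = \lambda_2 - \lambda_1 \ge 3\pi^2/|I|^2$; note $\|f\| = \|f/e_1\|_*$. Decomposing $v = \alpha + \tilde w$ with $\alpha \in \mathbb{R}$ (after a global phase) and $\langle \tilde w, 1\rangle_* = 0$, the normalization reads $\alpha^2 + \|\tilde w\|_*^2 = 1$. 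Three scalar products will produce the ingredients:
\begin{itemize}
\item the real part of $\langle A_v v + i(V-\tau)v, v\rangle_*$ yields $\|v'\|_*^2 \le \|f\|$, hence by the spectral gap $\|\tilde w\|_*^2 \le \|f\|/\delta$;
\item the imaginary part of $\langle f, u\rangle$ yields $|\tau| \le {\it Osc}(V)/2 + \|f\|$;
\item the imaginary part of $\langle f, \varphi u\rangle$, with $\varphi\in L$ a maximizer of $\omega_2(V)$ (so $\|\varphi\|_\infty\le 1$ and $\|\varphi'\|_\infty\le 2\pi/|I|$), gives (after integration by parts, with boundary terms vanishing in both cases)
\[
\mathop{\rm Im}\langle f,\varphi u\rangle \,=\, \mathop{\rm Im}\!\!\int\!\varphi' u'\bar u\,dx + \int V\varphi|u|^2\,dx - \tau\int \varphi|u|^2\,dx.
\]
\end{itemize}

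Expanding in $v = \alpha + \tilde w$: $\int V\varphi|u|^2 = \alpha^2 \omega_2(V) + O({\it Osc}(V)\|\tilde w\|_*)$; $\int \varphi|u|^2 = O(\|\tilde w\|_*)$ (the leading term $\alpha^2\int\varphi e_1^2$ vanishes because $\varphi\in L$); and the ground-state substitution further shows that $\mathop{\rm Im}(u'\bar u) = \mathop{\rm Im}(v'\bar v)\,e_1^2$, because the term $|v|^2 e_1 e_1'$ in $u'\bar u$ is real. A Cauchy--Schwarz bound then gives $|\mathop{\rm Im}\!\int\varphi' u'\bar u| \le \|\varphi'\|_\infty \|v'\|_* \le (2\pi/|I|)\sqrt{\|f\|}$. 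Combining, and using $\alpha^2 = 1 - \|\tilde w\|_*^2$ together with the bounds on $\|\tilde w\|_*$ and $|\tau|$, produces an inequality of the form
\[
\omega_2(V) \,\le\, \Bigl(\tfrac{2\pi}{|I|} + \tfrac{2\,{\it Osc}(V)}{\sqrt\delta}\Bigr)\sqrt{\|f\|} + O(\|f\|).
\]
A short bootstrap absorbs the $O(\|f\|)$ terms, and inverting yields $\|f\|\ge\omega_2(V)^2/C^2$ with $C := 2\pi/|I| + 2\,{\it Osc}(V)/\sqrt\delta$; using $\delta\ge 3\pi^2/|I|^2$ and $(a+b)^2\le 2(a^2+b^2)$ then gives the stated bound with constant $1/18$.

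The main obstacle is the control of $\mathop{\rm Im}\!\int \varphi' u'\bar u$: a direct $L^2$ Cauchy--Schwarz bound would give $\|\varphi'\|_\infty\|u'\|$, which is only $O(1)$ since $\|u'\|\sim\sqrt{\lambda_1}$ does not tend to zero with $\|f\|$. The ground-state substitution is precisely what extracts the extra factor $\sqrt{\|f\|}$, by revealing that the $\alpha e_1'$ contribution to $u'$ enters only the real part of $u'\bar u$ and therefore drops out of the imaginary part, leaving only $v'=\tilde w'$ which is controlled by the spectral gap.
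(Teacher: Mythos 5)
Your proposal is correct in outline and takes a genuinely different route from the paper's proof. The paper normalizes by the source ($f$ fixed, bound $\|u\|$) and decomposes $u = \langle u,e_1\rangle e_1 + v$ with $v$ orthogonal to $e_1$ in the flat $L^2(I)$; it then splits into two cases according to whether $\|V-s\|_\infty$ is large or small, reserving the weighted test-function identity (their \eqref{eq:weightedident}) for the second case and closing with a quadratic inequality $\|u\| \le \alpha\|f\| + \beta\sqrt{\|f\|\|u\|}$. You instead normalize $\|u\|=1$, perform the ground-state substitution $u=ve_1$ and work in $L^2(e_1^2dx)$, and replace the two-case split by the WLOG centering $\|V\|_\infty\le {\it Osc}(V)/2$ (legitimate since $\omega_2$, ${\it Osc}$ and $r(\lambda_1,\cdot)$ are all shift-invariant) together with the universal bound $|\tau|\le {\it Osc}(V)/2+\|f\|$ extracted from ${\rm Im}\langle f,u\rangle$. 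The two key cancellations are the same in substance: the paper drops the real term $|C_1|^2\int e_1e_1'\varphi'$ from ${\rm Im}\int u'\bar u\varphi'$ after expanding $u=C_1e_1+v$, while you observe that $|v|^2e_1e_1'$ is real so that ${\rm Im}(u'\bar u)={\rm Im}(v'\bar v)e_1^2$, and then $v'=\tilde w'$ is controlled by the weighted spectral gap. Your version is conceptually cleaner and avoids the case analysis; the paper's is more elementary in that it never leaves flat $L^2$.

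One caveat: the final step ``a short bootstrap absorbs the $O(\|f\|)$ terms, and inverting yields $\|f\|\ge\omega_2^2/C^2$'' is optimistic as stated — absorbing the first-order (in $\|f\|$) and higher terms necessarily loses a multiplicative factor relative to the leading coefficient $C$, so one cannot literally drop them and invert. The argument does close, but only because of the slack you already have: your $C^2\le 8\pi^2/|I|^2 + \tfrac{8}{3}{\it Osc}(V)^2|I|^2/\pi^2$ sits comfortably below the target $18(\pi^2/|I|^2+{\it Osc}(V)^2|I|^2/\pi^2)$, and, working contrapositively under $\|f\|<\omega_2^2/[18(\cdots)]$ (which also forces $\|f\|\le \pi^2/(72|I|^2)$ since $\omega_2\le{\it Osc}(V)/2$), the $O(\|f\|)$, $O(\|f\|^{3/2})$ and $O(\|f\|^2)$ contributions are each strictly dominated, leaving the leading $\tfrac{2}{3}\omega_2$ bound to produce the contradiction. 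You should make this quantitative rather than gesture at it, since otherwise a reader cannot verify that the constant $1/18$ survives. The paper sidesteps this issue entirely via the explicit root of the quadratic $\|u\|\le\alpha\|f\|+\beta\|f\|^{1/2}\|u\|^{1/2}$, which is perhaps worth emulating: one can arrange your inequality as a polynomial in $\sqrt{\|f\|}$ and bound its smallest positive root directly.
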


\begin{proof}
The fact that $\omega_2(V) > 0$ when $V$ is non constant is immediate.
Indeed, by the du Bois-Reymond lemma there exists 
$\phi \in \mathcal{D}(I)$ such that $\int_I \phi = 0$ and $\int_I
V \phi \neq 0.$ Since $e_1$ is smooth and positive in the interior of
$I$, we may set $\varphi := \phi / e_1^2$, and we have $\varphi \in
\mathcal{D}(I)$, $\int_I \varphi e_1^2 = 0$, and $\int_I
V \varphi e_1^2 \neq 0.$ The conclusion follows by multiplying
$\varphi$ by a suitable non zero constant. 

We now turn to the estimate of $r(\lambda_1).$ We write any 
$z \in \C$ s.t. ${\rm Re}(z) = \lambda_1$  as 
$z = \lambda_1 + is$, with $s \in \R.$ For $f \in L^2(I)$, 
we wish to estimate a solution $u \in D(A)$ of the equation 
\begin{equation}\label{eq:resolv}
-\partial_{xx}u - \lambda_1 u + i(V - s)u = f.
\end{equation}
For that purpose, we decompose $u$ as 
\begin{equation}\label{eq:decomp-u}
    u = \langle u, e_1\rangle e_1 + v,
\end{equation} and let $C_1 := \langle u, e_1 \rangle.$ Since $v$ is orthogonal to $e_1$ we have
\begin{equation}\label{eq:lambda2}
\int_I |\partial_x v|^2 \geq \lambda_2 \int_I |v|^2,
\end{equation}
where recall $\lambda_2>\lambda_1$ is the second eigenvalue of the corresponding 
\textit{Laplace} operator. Therefore
\begin{equation}\label{eq:spectralgap1}
\int_I |\partial_x v|^2 - \lambda_1 |v|^2 \geq \big(1 -
\frac{\lambda_1}{\lambda_2}\big)
\int_I |\partial_x v|^2 \geq (\lambda_2 - \lambda_1) \int_I
|v|^2.
\end{equation}
Multiplying equation \eqref{eq:resolv} by the conjugate $\bar u$ and then retaining the real
part, we obtain 
$$
\int_I |\partial_x u|^2 - \lambda_1 |u|^2 \leq \|f\|\: \|u\|. 
$$
Since 
$$
\int_I |\partial_x u|^2 - \lambda_1 |u|^2 = \int_I |\partial_x v|^2 - \lambda_1
|v|^2,
$$
from \eqref{eq:spectralgap1} we deduce the estimates
\begin{equation}\label{eq:controlv}
	\|v\|^2 \leq \frac{1}{\lambda_2 - \lambda_1}\|f\|\: \|u\|\quad \text{
	and } \quad
	\|\partial_x v\|^2 \leq \big(1 - \frac{\lambda_1}{\lambda_2}\big)^{-1}
\|f\|\: \|u\|.
\end{equation}
If instead we retain the imaginary part, we obtain
\begin{equation}\label{eq:controlweightedu}
\left| \int_I (V - s) |u|^2 \right| \leq \|f\|\: \|u\|.
\end{equation}

We shall distinguish two cases for $s$.

\smallskip

\noindent{\bf Case 1:} $\|V - s\|_\infty \geq Osc(V) + \omega_2(V).$\\
Then in \eqref{eq:controlweightedu} the 
weight $(V - s)$ is of constant sign a.e. on $I$ and moreover 
by definition of $Osc(V)$ 
$$
|V - s| \geq \omega_2(V) \qquad a.e. \text{ on } I.
$$
We therefore deduce from \eqref{eq:controlweightedu} that
\begin{equation}\label{eq:case1}
\|u \| \leq \frac{1}{\omega_2(V)} \|f\|.
\end{equation}

\medskip
\noindent{\bf Case 2:} $\|V - s\|_\infty \leq Osc(V) + \omega_2(V).$\\
In this case, the difficulty is that $V - s$ possibly changes sign. We aim to use the 
decomposition \eqref{eq:decomp-u} and to control $u$ by (controlling) 
$C_1=\langle u, e_1 \rangle$. To do so, we consider some $\phi \in Lip(I, \R)$ 
such that $\phi e_1(a) = \phi e_1(b)$,
multiply equation \eqref{eq:resolv} by $\phi \bar u$ and then retain the
imaginary part. Using that $e_1$ is real, this yields the identity 
\begin{equation}\label{eq:weightedident}\begin{split}
&{\rm Im}\Big(
C_1 \int_I \partial_x e_1 \bar v \phi' + \overline{C_1} \int_I e_1 \partial_x v \phi'
+ \int_I \bar v \partial_x v \phi' \Big) + |C_1|^2 \int_I (V - s)\phi e_1^2\\
& + \int_I (V - s) \phi \left[ |v|^2 + 2 {\rm Re}(C_1 e_1\bar v)\right]
= {\rm Im}\Big( \int_I f\phi \bar u\Big),
\end{split}\end{equation}
from which it follows that
\begin{equation}\label{eq:controlC1}\begin{split}
|C_1|^2 \Big| \int_I (V-s)\phi e_1^2 \Big| \leq 
& \ \|f\| \|u\| \|\phi\|_\infty\\
	& + |C_1| \|\phi'\|_\infty \Big[ \sqrt{\lambda_1}\|v\| + \|\partial_x v\| \Big] \\
& + \|\phi'\|_\infty \|\partial_x v\| \|v\|\\
&+ \|V - s\|_\infty \|\phi\|_\infty \Big[ 2 |C_1| \|v\| + \|v\|^2\Big],
\end{split}\end{equation}
where we used the Cauchy-Schwarz inequality with $\|e_1\|=1$ and 
$\|\partial_x e_1\|=\sqrt{\lambda_1}$. We now assume moreover that $\phi$ is a maximizer 
for $\omega_2(V)$ in 
\eqref{eq:defomega2}, that is $\int_I \phi e_1^2 = 0$,
$\|\phi\|_\infty \leq 1$, $\|\phi'\|_\infty  \leq 2\pi/|I|=\sqrt{\lambda_2}$ and 
$\int V \phi e_1^2 = \omega_2(V).$
In particular,
$$
\int_I (V-s)\phi e_1^2 = \int_I V \phi e_1^2 = \omega_2(V).
$$
Rewriting \eqref{eq:controlC1} taking into account the latter as well as 
\eqref{eq:controlv} and the assumption $\|V-s\|_\infty \leq Osc(V) +\omega_2(V)$ 
we then obtain 
\begin{equation}\label{eq:controlC1bis}\begin{split}
	|C_1|^2 \leq & \frac{1}{\omega_2(V)}\Bigg[ \Big( 1 + 
	\frac{\lambda_2}{\lambda_2-\lambda_1}\big(1 + \frac{(
	Osc(V) + \omega_2(V))}{\lambda_2}\big)\Big)
	\|f\| \|u\|\\
	& + \frac{\lambda_2}{\sqrt{\lambda_2-\lambda_1}}\Big(1 +
	\sqrt{\frac{\lambda_1}{\lambda_2}} + \frac{2(Osc(V) + \omega_2(V))}{\lambda_2}\Big)
	|C_1| \|f\|^\frac12 \|u\|^\frac12 \Bigg]. 
\end{split}
\end{equation} 
Since $\|u\|^2 = |C_1|^2 + \|v\|^2$, adding \eqref{eq:controlC1bis} with 
the first inequality in \eqref{eq:controlv}, and then replacing $|C_1|$ by $\|u\|$ on the
r.h.s. of the resulting inequality we finally obtain, after division by $\|u\|$, 
\begin{equation}\label{eq:quadratic0}
\|u\| \leq \alpha \|f\| + \beta \|f\|^\frac12 \|u\|^\frac12,
\end{equation}
where
$$
\alpha = \frac{1}{\omega_2(V)}\Big( 1 + 
    \frac{\lambda_2}{\lambda_2- \lambda_1}\Big) + \frac{Osc(V) + \omega_2(V)}{\omega_2(V)
    (\lambda_2 - \lambda_1)} +\frac{1}{\lambda_2 - \lambda_1} 
$$
and
$$
\beta = \frac{1}{\omega_2(V) \sqrt{\lambda_2-\lambda_1}} \Big(\lambda_2(1 +
	\sqrt{\tfrac{\lambda_1}{\lambda_2}})  + 2(Osc(V) + \omega_2(V))\Big).
$$
Analyzing \eqref{eq:quadratic0} as an inequality for a quadratic polynomial in
$\|u\|^\frac{1}{2}$, and computing its largest root, it immediately follows 
that  
\begin{equation}\label{eq:case2}
\|u\| \leq (4\alpha + \beta^2) \|f\|.
\end{equation}
In the remaining part of this proof we use the short hand notations $\omega$ for
$\omega_2(V)$ and $O$ for ${\it Osc}(V).$ In both boundary conditions 
cases\footnote{As a matter of fact, for the periodic case we have 
$\lambda_1=0$ and $\lambda_2=(2\pi/|I|)^2$, while for the Dirichlet case we 
have $\lambda_1=(\pi/|I|)^2$ and $\lambda_2=(2\pi/|I|)^2$.}, we have $\lambda_2 = (2\pi/|I|)^2$,
$\lambda_2 / (\lambda_2 - \lambda_1) \leq \frac{4}{3},$ and $\lambda_1 /
\lambda_2 \leq \frac{1}{4}$. In particular
$$
\alpha \leq \frac{7}{3 \omega} + \frac{|I|^2}{3\pi^2}\left(2 + 
\frac{O}{\omega}\right)
$$
and
$$
\beta \leq \frac{2\sqrt{3}\pi}{\omega|I|} + \frac{2|I|}{\sqrt{3}\pi}\left( 1 +
\frac{O}{\omega}\right),
$$
and therefore
\begin{equation}\label{eq:boundalphabeta}
4\alpha + \beta^2 \leq \frac{28}{3\omega} + \frac{4|I|^2}{3\pi^2}
\big(3 + 3\frac{O}{\omega} + (\frac{O}{\omega})^2\big)  
+ \frac{12\pi^2}{|I|^2\omega^2} + \frac{8}{\omega}(1 +  \frac{O}{\omega}).
\end{equation}
Note that from the definition \eqref{eq:defomega2} of $\omega$, and using
the fact that $e_1$ is $L^2$ normalized, it follows that in all
circumstances $\omega \leq \frac{1}{2}O.$ 
In \eqref{eq:boundalphabeta}, we may therefore estimate
$$
\frac{1}{\omega} \leq \frac{1}{2}\frac{\pi}{|I|\omega} \frac{|I|}{\pi}
\frac{O}{\omega} \leq \frac{1}{8} \frac{\pi^2}{|I|^2\omega^2} +
\frac{1}{2} \frac{|I|^2}{\pi^2}(\frac{O}{\omega})^2,
$$
$$
\big(3 + 3\frac{O}{\omega} + (\frac{O}{\omega})^2\big) \leq \frac{13}{4}(
\frac{O}{\omega})^2,
$$
and 
$$
\frac{8}{\omega}(1+ \frac{O}{\omega}) \leq 12 \frac{1}{\omega} \frac{O}{\omega} 
\leq 4\frac{\pi^2}{|I|^2\omega^2} + 9
\frac{|I|^2}{\pi^2} (\frac{O}{\omega})^2,
$$
which yields
\begin{equation}\label{eq:boundalphabeta2}
4\alpha + \beta^2 \leq (16 + \frac{7}{6}) \frac{\pi^2}{|I|^2\omega^2} +
18\frac{|I|^2}{\pi^2} (\frac{O}{\omega})^2 \leq \frac{18}{\omega^2} 
\left( \frac{\pi^2}{|I|^2} +
\frac{|I|^2}{\pi^2} O^2\right).
\end{equation}

We are now in position to end the proof in all cases. Indeed, note that in 
Case 1 estimate \eqref{eq:case1} we may transform as above  
$$
\frac{1}{\omega}  \leq \frac{1}{2}\frac{1}{\omega}\frac{O}{\omega} 
\leq 
\frac{1}{4}\frac{\pi^2}{|I|^2\omega^2} + 
\frac{1}{4}\frac{|I|^2}{\pi^2}(\frac{O}{\omega})^2
\leq
\frac{1}{4\omega^2} \left( \frac{\pi^2}{|I|^2} +
\frac{|I|^2}{\pi^2} O^2\right),
$$
so that combining \eqref{eq:case1} with \eqref{eq:case2} and
\eqref{eq:boundalphabeta2} the conclusion \eqref{eq:prop1} follows.  
\end{proof}

\begin{rem}
In the case of periodic boundary conditions we
have $\lambda_1 = 0$ and in the proof of Proposition \ref{prop:v1} we may slightly 
improve the bounds on $\alpha$ and $\beta$; in particular for $I = \T$  we
obtain
$$
\alpha \leq \frac{2}{\omega} + \frac{1}{4\pi^2}(2 + \frac{O}{\omega}), \qquad
\beta \leq \frac{2\pi}{\omega} + \frac{1}{\pi}(1 + \frac{O}{\omega}),
$$
from which it follows, taking once more into account the bound $\omega \leq
\frac{1}{2}O$, that
$$
4\alpha + \beta^2 \leq \frac{4\pi^2}{\omega^2} + 10 \frac{O}{\omega^2} +
\frac{13}{4\pi^2}(\frac{O}{\omega})^2 \leq \frac{4\pi^2(1 + O)^2}{\omega^2},
$$
and then that
\begin{equation}\label{eq:boundimproved}
r(\lambda_1, V) \geq \left( \frac{\omega_2(V)}{2\pi(1 + {\it Osc}(V))}\right)^2.
\end{equation}
\end{rem}

\medskip
We are now in position to present the
\begin{proof}[Proof of Proposition \ref{prop:L2}]
As mentioned in the introduction we decompose the solution in Fourier series
$$
u(t, x, y) = \sum_{k \in \Z} \hat u_k(t,x) e^{2i\pi ky},
$$
so that $u_k$ solves the equation
$$
\partial_t u_k - \partial_{xx}u_k + 2i\pi kV(x)u_k = 0.
$$
For $k = 0$ we simply get that the mean in $y$ of the solution 
solves the heat equation on $\T$ and therefore
$$
\|u_0(t) - \int_{\T^2}u\|_{L^2(\T)} \leq e^{-4\pi^2t} \|u_0(0) -
\int_{\T^2}u\|_{L^2(\T)}.
$$
By Proposition \ref{prop:v1} and more precisely \eqref{eq:boundimproved}, 
for $k \neq 0$ we have
\begin{equation}\label{eq:bornefou}
r(0, 2\pi kV) \geq  
\left( \frac{\omega_2(2\pi k V)}{2\pi(1 + {\it Osc}(2\pi k V))}\right)^2 \geq
 \left( \frac{\omega_2(V)}{2\pi(1 + {\it Osc}(V))}\right)^2.
\end{equation}
The conclusion then follows from Parseval-Plancherel identity and
\eqref{eq:weiineq}.
\end{proof}

\medskip

Note that in view of the inequality $\omega_2(V) \leq \frac12 {\it Osc}(V)$ the lower 
bound provided in the right hand side of \eqref{eq:prop1} is always bounded from above 
by $\frac{\pi^2}{72|I|^2]}$, in particular independent of $V$. This, as \eqref{eq:bornefou} 
shows, will not allow to prove a regularizing effect through Fourier decomposition. 
The next result is an extension of \cite{Wei21} Lemma 4.3, which studied the case of 
periodic boundary conditions, to the case of Dirichlet boundary conditions. 
It provides an alternative lower bound for $r(\lambda_1)$ which will be useful for 
studying large Fourier sectors.

\smallskip

For $0 < \eps < |I|/2$, define the quantity 
\begin{equation}\label{def:omega1}
\omega_1(\eps, V) := \inf_{\substack{J \subseteq I\\J \text{ interval}\\|J| \geq
2\eps}} \quad \inf_{p, q \in \R} \left\{
	\int_J |PV(x) - (px+q)|^2\, dx \right\},
\end{equation}
where $PV$ refers to an arbitrary primitive of $V$ on $I$.
Similar to $\omega_2(V)$, it measures how far $V$ is from being constant, but it
introduces an additional scale $\eps$. In particular, $\omega_1(\eps, V) > 0$ if and 
only there is no interval $J\subset I$ of length $2\eps$ on which $V$ is 
a constant. 

\begin{prop}[Extended from \cite{Wei21}]\label{prop:v2}  For arbitrary $0 < \eps < |I| / 2$,
\begin{equation}\label{eq:prop2}
r(\lambda_1, V) + \lambda_1 \geq \frac{1}{\eps^2} \varphi^{-1}\big(\eps
\omega_1(\eps, V)\big)^2
\end{equation}
where $\varphi:[0,\frac{\pi}{2}) \to [0,+\infty)$ is the bijection $\varphi(s) := 36 s
\tan(s)$.
\end{prop}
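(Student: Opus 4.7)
The plan is to adapt the resolvent estimate of \cite{Wei21} (Lemma 4.3 there, in the periodic setting) to the Dirichlet setting. By the triangle inequality $\|(A - \lambda_1 - is)u\|_{L^2} + \lambda_1\|u\|_{L^2} \geq \|(A - is)u\|_{L^2}$, the claim reduces to the ``unshifted'' bound
\begin{equation*}
\|(A - is)u\|_{L^2(I)} \geq \tfrac{1}{\eps^2}\,\varphi^{-1}\!\big(\eps\,\omega_1(\eps,V)\big)^{2}\,\|u\|_{L^2(I)}, \qquad \forall\, u \in D(A),\ \forall\, s \in \R,
\end{equation*}
which is a clean resolvent estimate for $A$ along the imaginary axis. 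I would target this stronger form, normalize $\|u\|_{L^2} = 1$, and set $g := (A - is)u = -u'' + i(V-s)u$, so that the task becomes a lower bound on $\|g\|_{L^2(I)}$.

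First, I would record the standard pairings of the equation with $\bar u$: the real part gives $\|u'\|_{L^2}^2 \leq \|g\|\|u\|$ and the imaginary part gives $|\int_I (V-s)|u|^2\,dx| \leq \|g\|\|u\|$; these encode that, when $\|g\|$ is small, $u$ is nearly harmonic and the $|u|^2$-weighted mean of $V$ is nearly $s$. Next, I would localize on a subinterval $J = [\alpha, \alpha + 2\eps] \subseteq I$ drawn from the infimum in the definition of $\omega_1$. Multiplying the equation by $\chi\bar u$ for a smooth real weight $\chi$ supported in $J$, integrating by parts, and extracting imaginary parts produce a localized identity. Writing $V - s - p = (PV - sx - px - q)'$ for arbitrary $p, q \in \R$ and integrating by parts once more, the primitive $W_{p,q}(x) := PV(x) - sx - px - q$ becomes the central object, and the hypothesis on $\omega_1$ enters as the uniform lower bound $\|W_{p,q}\|_{L^2(J)}^2 \geq \omega_1(\eps, V)$, valid for every $(p, q) \in \R^2$.

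The technical heart of the proof is the joint optimization of the weight $\chi$ and of the parameters $p, q$, out of which the function $\varphi(t) = 36\,t\tan t$ of the statement must emerge. Following the strategy of \cite{Wei21}, I would look for the extremal $\chi$ among trigonometric profiles of the form $\cos(\lambda(x-\alpha-\eps))$ on $J$, with frequency $\lambda \in (0, \pi/(2\eps))$ to be tuned. Balancing the elastic energy of $\chi$ (scaling as $\lambda^2$) against the terms involving $u'$ and $W_{p,q}$ produces an implicit relation equivalent to $\varphi(\lambda\eps) = \eps\,\omega_1(\eps, V)$; inversion gives $\lambda\eps = \varphi^{-1}(\eps\omega_1(\eps,V))$, and the associated bound $\|g\| \geq \lambda^2 \|u\|$ is precisely the desired inequality.

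The main obstacle will be this last optimization: carrying the algebra through so that the extremal ratio produces precisely the factor $36\,t\tan t$ with the correct inversion range $[0,\pi/2)$. Apart from that, the Dirichlet adaptation is mild. The cutoff $\chi$ being supported inside $J\subseteq I$ makes the Dirichlet boundary condition on $u$ at $\partial I$ irrelevant in the localized identities; the only trace of that boundary condition on the final estimate is the shift by $\lambda_1 = (\pi/|I|)^2 > 0$, which is precisely what the triangle-inequality reduction above absorbs.
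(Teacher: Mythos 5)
Your triangle-inequality reduction is a valid observation: since $\|(A - \lambda_1 - is)u\| + \lambda_1\|u\| \geq \|(A - is)u\|$, it does suffice to prove the unshifted bound $\inf_{s\in\R}\inf_{\|u\|=1}\|(A-is)u\| \geq \frac{1}{\eps^2}\varphi^{-1}(\eps\omega_1(\eps,V))^2$, and that stronger claim is in fact true. This is a legitimate alternative to the paper's choice of carrying the Dirichlet shift inside the quantity $\Lambda := \sqrt{\mu + \lambda_1}$. The gap lies entirely in your proposed proof of the reduced claim.

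The roadmap you sketch afterwards --- pairing against a compactly supported real multiplier $\chi\bar u$, introducing $W_{p,q}$, and optimizing $\chi$ over the trigonometric family $\cos(\lambda(x-\alpha-\eps))$ --- is \emph{not} the strategy of \cite{Wei21}, and is not what the paper does. The multiplier argument you describe is closer in spirit to the proof of Proposition~\ref{prop:v1}, which produces bounds in terms of $\omega_2$ and carries no $\tan$ factor; it provides only \emph{integral} control on $u$, and there is no indication that the optimization you outline would close with the precise function $36\,s\tan s$. You flag this yourself as the ``main obstacle'' without resolving it, and it is precisely the content of the proposition.

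What actually produces $\varphi(s)=36\,s\tan s$ is a different mechanism, imported from \cite{Wei21} Lemmas 4.1--4.2: (i) one shows by a compactness argument that the infimum defining the resolvent bound is \emph{attained}, and that a minimizer $u_0$ solves the nonlinear Euler--Lagrange equation $Au_0-(\lambda_1+is)u_0=\mu\bar{u_0}$ (for the unshifted version, $Au_0-isu_0=\mu'\bar{u_0}$ with $\mu'=\Lambda'^2$); (ii) one normalizes $\|u\|_\infty=1$ (not $\|u\|_{L^2}=1$), picks $x_0$ with $|u(x_0)|=1$, and writes $u=e^{\rho+i\theta}$ on the maximal subinterval around $x_0$ where $u\neq 0$ --- the Dirichlet condition enters only here, through $u(a)=u(b)=0$; (iii) from the real part of the ODE one sees that $\rho_1:=\arctan(\rho'/\Lambda)$ satisfies $\rho_1'\geq-\Lambda$, giving the \emph{pointwise} bound $|\rho'(x)|\leq\Lambda\tan(\Lambda|x-x_0|)$; (iv) integrating the two scalar ODEs for $(\rho,\theta)$ against this pointwise bound over $J=[x_0-\eps,x_0+\eps]$ gives $\int_J(\theta')^2\leq 4\Lambda\tan(\Lambda\eps)$ and $\int_J|PV-\theta'-(px+q)|^2\leq 16\Lambda\tan(\Lambda\eps)$, whose $L^2(J)$ triangle inequality yields $(2+4)^2\Lambda\tan(\Lambda\eps)=36\Lambda\tan(\Lambda\eps)$. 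The $\tan$ comes from the Riccati/arctan control of $\rho'$, not from a trigonometric cutoff; a purely variational multiplier identity does not reproduce it. Your proposal is missing this entire minimizer--ODE--pointwise-estimate chain, which is the heart of the result.
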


Note that in the case of Dirichlet boundary conditions $\lambda_1 > 0$ and 
Proposition \ref{prop:v2} only provides a non trivial lower bound for 
$r(\lambda_1)$ if for some value of $\eps$ the r.h.s. in \eqref{eq:prop2} is 
strictly larger than $\lambda_1.$ On the other hand, because of the term
$1/\eps^2$, that same r.h.s. might become large for small values of $\eps$ 
if $V$ allows it. 
This is in contrast with the lower bound for $r(\lambda_1,V)$ given by  
Proposition \ref{prop:v1}, which, as already mentioned, is always positive but on 
the other hand is also bounded above by a constant depending only on $|I|$, in particular 
independent of $V$.

\begin{proof}[Proof of Proposition \ref{prop:v2}]
The argument in the case of periodic boundary conditions is presented in  
details in \cite{Wei21} Section 4. Since it carries over to the Dirichlet case 
with minor adaptations, we only highlight the key steps and differences.
Note also that \eqref{eq:prop2} is immediate if $V$ is identically constant,
because $\omega_1$ vanishes in that case. In the sequel we therefore assume
that $V$ is not identically constant on $I$.

Fix $s \in \R$ and let 
\begin{equation}\label{def:mu}
\mu := \inf\left\{ \|(Au - (\lambda_1 + is) u\|, 
\ u \in \mathcal{D}(A),\ \|u\| = 1\right\}.
\end{equation}
Since $s$ is arbitrary, by definition \eqref{eq:rlambda1} it suffices 
for the proof to show that the equivalent of \eqref{eq:prop2} holds 
with $r(\lambda_1, V)$ replaced by $\mu.$ 

From \cite{Wei21} Lemma 4.1, which applies 
word for word, the infimum defining $\mu$ is achieved, and 
there exist a minimizer $u_0$ satisfying the equation
\begin{equation}\label{eq:minimizer}
   Au_0 -(\lambda_1 + is)u_0 = \mu \bar{u_0}. 
\end{equation}
It follows in particular that $\mu$ is positive. Indeed, by
contradiction if $\mu = 0$ then $u_0$ would need to be proportional to $e_1$, 
and at the same time satisfy the equation
$$
0 = -\partial_{xx} e_1 - \lambda_1 e_1 = -i(V-s)e_1 \qquad\text{on } I,
$$
which obviously doesn't hold because $e_1 >0$ in the interior of $I$ and $V$ is 
not identically constant on $I$. 

In the sequel we denote by $u$ a suitable multiple of $u_0$   so $\|u\|_\infty = 1.$ 
Choose then $x_0 \in I$ s.t. $|u(x_0)| = \|u\|_\infty = 1$, and set 
$x_- = \max\{a \leq x \leq x_0 \ | \ u(x) = 0\}$ and 
$x_+ = \min\{x_0 \leq x \leq b \ | \ u(x) = 0\}$. Here the maximum and minimum can 
indeed be achieved since $u$ is continuous and $u(a)=u(b)=0$. For $x$ in the open 
interval $I_\pm := (x_-,x_+)$ we have $|u(x)|> 0$ and may write 
$u(x) = \exp(\rho(x) + i \theta(x))$, with $\rho$ and $\theta$ being real 
valued. In particular $\rho(x) = \log(|u(x)|)$, $\rho(x)\leq \rho(x_0) = 0$ 
and $\rho'(x_0)=0$.  
Equation $Au - (\lambda_1 + i s)u = \mu \bar{u}$ translates within $I_\pm$ into
\begin{equation}\label{eq:rhotheta}
\left\{
\begin{array}{l}
-\rho''(x) - \rho'(x)^2 + \theta'(x)^2 - \lambda_1 = \mu \cos(2\theta(x)),\\
-\theta''(x) - 2\rho'(x)\theta'(x) + V(x) - s = -\mu \sin(2\theta(x)).
\end{array}
\right.
\end{equation}
Set $\Lambda := \sqrt{\mu + \lambda_1}$ and then the function 
$$
\rho_1(x) := \arctan\big(\frac{\rho'(x)}{\Lambda}\big) \qquad\text{in
}I_\pm.
$$
Observe that 
$$
\frac{\rho_1'}{\Lambda} + 1 = \frac{\rho''}{\Lambda^2 + (\rho')^2} + 1 = \frac{\rho'' +
(\rho')^2 + \Lambda^2}{(\rho')^2 + \Lambda^2} = \frac{(\theta')^2 + 2\mu
\sin^2(\theta)}{(\rho')^2 + \Lambda^2},
$$
where the last identity immediately follows from the first equation in
\eqref{eq:rhotheta}, and therefore, 
\begin{equation}\label{eq:rho1ineq}
\rho_1(x_0) = 0, \text{ and } \rho_1'(x) \geq - \Lambda \quad
\forall\, x \in I_\pm.
\end{equation}
Since $\rho(x_0) = 0$ and $\rho(x) = \log(|u(x)|)$ tends to $-\infty$ at
$x_\pm$, it follows that $\rho'$ is unbounded from below on $[x_0,x_+)$ 
and unbounded from above on $[x_-,x_0),$ and therefore that 
$\inf \{\rho_1(x),\ x \in [x_0,x_+)\} = -\frac{\pi}{2}$ and $\sup \{\rho_1(x),\ x 
\in [x_-,x_0)\} = \frac{\pi}{2}.$
From \eqref{eq:rho1ineq}, this yields
\begin{equation}\label{eq:bornexpm}
\big| x_\pm - x_0 \big| \geq \tfrac{\pi}{2\Lambda},
\end{equation}
as well as, in view of the definition of $\rho_1$, 
\begin{equation}\label{eq:bornerhoprime}
\big|\rho'(x)\big| \leq \Lambda \tan(|x-x_0|\Lambda)\qquad \forall\, x \in
(x_0-\tfrac{\pi}{2\Lambda},x_0+\tfrac{\pi}{2\Lambda}).
\end{equation}
Set $\eps_0 :=  \frac{\pi}{2\Lambda}$. We distinguish two cases for an arbitrary $0 < \eps < |I| / 2$:\\
\noindent{\bf Case 1:} If $\eps_0 \leq \eps < |I|/2$. Then 
\begin{equation}\label{eq:ok1}
\mu + \lambda_1 = \Lambda^2 \geq \frac{\pi^2}{4\eps^2} \geq \frac{1}{\eps^2}
\varphi^{-1}\big(\eps\omega_1(\eps, V)\big)^2,
\end{equation}
simply because $\varphi^{-1}$ is bounded above by $\frac{\pi}{2}.$ 

\noindent{\bf Case 2:} If $0 < \eps < \eps_0.$ Then 
consider the interval $J := [x_0-\eps, x_0 +\eps]$, which is a subset 
of $I_\pm$ by \eqref{eq:bornexpm}. 
From \eqref{eq:bornerhoprime} and the first equation in \eqref{eq:rhotheta} 
we deduce, following the lines in \cite{Wei21} Lemma 4.2, that 
\begin{equation}\label{eq:bornetheta2}
\int_J|\theta'(x)|^2\, dx \leq 
4\Lambda\tan(\eps \Lambda).
\end{equation}
Similarly, integrating the second equation in \eqref{eq:rhotheta} it follows 
that
\begin{equation}\label{eq:PVclose1}
\big| PV(x) - \theta'(x) -(px + q)| \leq 2\sqrt{2}\Lambda\tan(\Lambda|x-x_0|) -
\sqrt{2}\lambda_1 |x-x_0|,
\end{equation} 
where $p := s$ and $q := PV(x_0) - \theta'(x_0) - sx_0,$ and therefore that 
\begin{equation}\label{eq:PVclose2}\begin{split}
\int_J \big| PV(x) - \theta'(x) -(px + q)|^2 \,dx &\leq
\int_J 8\Lambda^2\tan^2(\Lambda|x-x_0|) \, dx\\
&\leq 16\Lambda\tan(\Lambda\eps).
\end{split}\end{equation}
Combining \eqref{eq:bornetheta2} and \eqref{eq:PVclose2} yields
$$
\eps \int_J |PV(x) - (px+q)|^2\,dx \leq 36\eps\Lambda\tan(\Lambda\eps) =
\varphi(\Lambda \eps),
$$
and therefore by definition of $\omega_1$ \eqref{def:omega1} that $\eps \omega_1(\eps, V) \leq
\varphi(\Lambda \eps)$. Since $\varphi$ is monotone increasing, we obtain 
as in \eqref{eq:ok1}
$$
\mu + \lambda_1 = \Lambda^2 \geq \frac{1}{\eps^2} \varphi^{-1}\big(\eps
\omega_1(\eps, V)\big)^2.
$$
This completes the proof of Proposition \ref{prop:v2}.
\end{proof}

\section{Weak local H\"ormander condition}\label{sect:Hor}

The purpose of this section is to present the proof of Theorem \ref{thm:Hor}.

\medskip

Without loss of generality, we may assume for the exposition that the ``good interval'' $I$ 
in Assumption $(H)$ is given by $I = [0,|I|]$.  
Also, because \eqref{eq:lowerbH} will eventually show that the lower bound is 
independent of $(x,y) \in \T^2,$ we shall use the notation 
$\mu(t) := P_t(x,y,\cdot)$ to stress that the results obtained are independent 
of $(x,y) \in \T^2.$ For $t > 0$, we denote by 
$$
G_t(x) := \frac{1}{\sqrt{4\pi t}} e^{-\frac{x^2}{4t}}
$$
the fundamental solution of the 1D heat equation on $\R$, and by 
$G_t^\T$ (resp. $G_t^I$) the corresponding fundamental solutions on
$\T$ (resp. on $I$ with homogeneous Dirichlet conditions).
By comparison principles, for all $t>0$ we have
\begin{equation}\label{eq:compgreen}
G_t^\T(x,x') \geq G_t(x-x') \ \forall\, x,x' \in \T, \qquad
G_t^I(x,x') \leq G_t(x-x') \ \forall\, x,x' \in I.
\end{equation}

\medskip

The first step of the argument is to notice that the probability 
measures $\int_{\T} \mu(t)\, dy$ defined on $\T$ by 
$$
\left(\int_{\T}\mu(t)\, dy \right)(B) := \mu(t)(B \times \T) \qquad \forall\, B \in \mathcal{B}(\T)
$$
are solution to the heat equation on $\T$, and therefore from \eqref{eq:compgreen} 
and as in  \eqref{eq:borneGtore} we deduce that 
\begin{equation*}\label{eq:borneGraison} 
\int_{\T}\mu(\frac{1}{8}) \, dy \geq \kappa_0 := \inf_{x,x' \in \T} 
G_\frac18^\T(x,x') \geq G_\frac18(\frac 12) = \sqrt{\frac{2}{e\pi}}.  
\end{equation*}
Since the heat equation preserves positivity, it 
follows that 
\begin{equation}\label{eq:lowerbheat}
\int_{\T}\mu(t) \, dy \geq \kappa_0, \qquad \forall\,
t \geq \frac18.
\end{equation}
Returning to the transition probabilities notations, \eqref{eq:lowerbheat} 
simply translates into 
\begin{equation*}
P_t(x,y,B \times \T) \geq \kappa_0 |B|, \qquad \forall\,
t \geq \frac18,
\end{equation*}
for any  $(x,y) \in \T^2$ and any $B \in \mathcal{B}(\T)$.

We now consider the equivalent to equation \eqref{eq:u} but set on the 
domain $I \times \T$ with homogeneous Dirichlet boundary conditions
on $\partial I \times \T.$ In terms $(X_t, Y_t)$, this amounts 
to kill the process whenever $X_t$ exits the interval $I.$ We denote by
$(\nu(t))_{t \geq 0}$ the solution corresponding to the initial
data\footnote{Here and in the sequel we use the symbol $\rest$ to denote the 
restriction of a function or a measure to a smaller domain.} 
$\nu(0) = \mu(\frac18) \rest_{I \times \T}.$ By the comparison
principle, it holds
\begin{equation}\label{eq:comparemunu}
\mu(t + \frac18) \geq \nu(t) \quad\text{on } I \times \T,\quad  \forall\, t \geq 0.
\end{equation}
We decompose in Fourier series 
$$
\nu(t) =: \sum_{k \in \Z} \nu_k(t) e^{2i\pi ky},
$$
where the measures $\nu_k(t) \in \mathcal{M}(I, \C)$ are then 
(weak) solutions to the equations 
\begin{equation}\label{eq:fouriernu}
\partial_t \nu_k - \partial_{xx} \nu_k + 2i\pi kV(x)\nu_k = 0
\end{equation}
on $\R_+ \times I$ with homogeneous Dirichlet boundary 
conditions on $\partial I$.

For $k = 0$, it follows from \eqref{eq:lowerbheat} that 
\begin{equation}\label{eq:lowerbnu0}
\nu_0(0) = \left(\int_{\T} \mu(\frac18)\, dy\right) \rest_{I} 
\geq \kappa_0 \geq \kappa_0 \sin(\frac{\pi}{|I|}x) \quad \text{on } I,
\end{equation}
and therefore that
\begin{equation}\label{eq:lowerbnut}
\nu_0(t) \geq \kappa_0 e^{-\frac{\pi^2}{|I|^2}t}
\sin(\frac{\pi}{|I|}x) \quad \text{on } I, \quad \forall\, t \geq 0.
\end{equation}
We claim that, under assumption $(H)$, at time $T := t_H - \frac14$ and for $\kappa_1 :=
\kappa_0/(2\sqrt{2})$ we have  
\begin{equation}\label{eq:nukperturb}
\sum_{k \neq 0} \|\nu_k(T)\|_{L^\infty(I)} \leq \kappa_1 e^{-\frac{\pi^2}{|I|^2}T},
\end{equation}
so that in particular from \eqref{eq:lowerbnut}
\begin{equation}\label{eq:nukperturb2}
\nu(T) \geq \nu_0(T) - \sum_{k \neq 0} \|\nu_k(T)\|_{L^\infty(I)}
\geq \kappa_1e^{-\frac{\pi^2}{|I|^2}T} \qquad\text{on } I'\times \T,
\end{equation}
where here and in the sequel $I' := \{x \in I \text{ s.t. } d(x,I^c) \geq
|I|/4\}.$ Postponing the proof of \eqref{eq:nukperturb}, we then deduce from 
 \eqref{eq:comparemunu} and \eqref{eq:nukperturb2} that
\begin{equation}\label{eq:bornelocalemu2}
\mu(t_H - \frac18) \geq \kappa_1e^{-\frac{\pi^2}{|I|^2}T} \qquad \text{on } I'\times \T.
\end{equation}
To globalize the previous lower bound to the whole of $\T^2$, we simply
use the Chapman-Kolmogorov equation and write (similar to the last step in Section~\ref{sect:plateau})
$$
\mu(t_H)(B) = \int \mu(t_H - \frac18)(dx'dy') P_\frac18(x',y',B) \qquad \forall\,
B \in \mathcal{B}(\T^2). 
$$
Using once more the fact that averaging of \eqref{eq:u} in $y$ yields the 
1D heat equation on $\T$, combined with \eqref{eq:bornelocalemu2}, we therefore obtain
\begin{equation*}\begin{split}
P_{t_H}(x,y, B) = \mu(t_H)(B) &\geq \kappa_1 e^{-\frac{\pi^2}{|I|^2}T} \int_{I'\times \T}
P_\frac18(x',y', B) \, dx'dy'\\
&= \kappa_1 e^{-\frac{\pi^2}{|I|^2}T} \int_{\T^2}\int_{I'}
G_\frac18^\T(x',x^*) \, dx'\: 1_{B}(x^*,y^*)\, dx^*dy^*\\
&\geq \frac{\kappa_1}{2}|I| e^{-\frac{\pi^2}{|I|^2}T}\Big(\inf_{x',x^* \in
\T}G_\frac18^\T(x',x^*)\Big) |B|\\
&= \frac{\kappa_0\kappa_1}{2}|I|e^{-\frac{\pi^2}{|I|^2}T} |B| \geq
\frac{1}{3}|I| e^{-\frac{\pi^2}{|I|^2}t_H} |B|,
\end{split}
\end{equation*}
for any $B \in \mathcal{B}(\T^2)$, which is nothing but \eqref{eq:lowerbH}. 

\smallskip

It remains to prove our claim \eqref{eq:nukperturb}.\\
Remark first that since $\nu_k(t) = \int \nu(t) e^{-2i\pi ky}dy$, for any 
$t\geq 0$ we have 
\begin{equation}\label{eq:firstboundnu}
\|\nu_k(t)\|_{\mathcal{M}(I)} \leq \|\nu(t)\|_{\mathcal{M}(I \times \T)} \leq
\|\mu(t)\|_{\mathcal{M}(\T^2)} = 1.  
\end{equation}

For $k \neq 0$ fixed, we will use the regularizing effect of
\eqref{eq:fouriernu}: first from $\mathcal{M}(I)$ into $L^2(I)$ 
(with a possible polynomial growth in $|k|$) by viewing the non-homogeneous
term in \eqref{eq:fouriernu} as a forcing, then the long time
and $k$-dependent exponential decay in $L^2(I)$ provided by the 
analysis in Section \ref{sect:spectral}, and finally from $L^2(I)$ 
into $L^\infty(I)$ (still with a possible polynomial growth in $|k|$). 
For that purpose, consider a solution of the equation 
$\partial_t f - \partial_{xx} f = g$ on $(t,x) \in U \times U$  where $U :=
[0,1]$, with homogeneous Dirichlet conditions on $U \times \partial U$. Whenever
$f(0) \in \mathcal{M}(U)$ and $g \in L^\infty(U, \mathcal{M}(U))$, 
we have
\begin{equation}\label{eq:regu1}\begin{split}
\|f(1)\|_{L^2(U)} &\leq \|e^{-t\partial_{xx}}f(0)\|_{L^2(U)} + \int_0^1 
\|e^{-(1-s)\partial_{xx}}g(s)\|_{L^2(U)}\, ds\\
&\leq \|G_1\|_{L^2(\R)}\|f(0)\|_{\mathcal{M}(U)} + \int_0^1
\|G_{1-s}\|_{L^2(\R)}\|g(s)\|_{\mathcal{M}(U)}\, ds\\
&\leq \frac{1}{\sqrt{8\pi }}\Big( \|f(0)\|_{\mathcal{M}(U)} + 2 \sup_{t \in
U}\|g(t)\|_{\mathcal{M}(U)}\Big).
\end{split}\end{equation}
Similarly, if $f(0) \in L^2(U)$ and $g \in L^\infty(U, L^2(U))$
we have
\begin{equation}\label{eq:regu2}\begin{split}
\|f(1)\|_{L^\infty(U)} &\leq \|e^{-t\partial_{xx}}f(0)\|_{L^\infty(U)} + \int_0^1 
\|e^{-(1-s)\partial_{xx}}g(s)\|_{L^\infty(U)}\, ds\\
&\leq \|G_1\|_{L^2(\R)}\|f(0)\|_{L^2(U)} + \int_0^1
\|G_{1-s}\|_{L^2(\R)}\|g(s)\|_{L^2(U)}\, ds\\
&\leq \frac{1}{\sqrt{8\pi }}\Big( \|f(0)\|_{L^2(U)} + 2 \sup_{t \in
U}\|g(t)\|_{L^2(U)}\Big).
\end{split}\end{equation}
Applying \eqref{eq:regu1} to \eqref{eq:fouriernu}
we obtain, after a straightforward parabolic scaling, a constant shift in $V$, and taking into account
the uniform bound \eqref{eq:firstboundnu} as well as the fact (to globalize in
$t$) that \eqref{eq:fouriernu} is a contraction in $L^2(I)$, that 
\begin{equation}\label{eq:regunuk1}
\|\nu_k(t)\|_{L^2(I)} \leq \frac{1}{\sqrt{8\pi|I| }}\Big(1 +
2\pi|k|{\it Osc}(V)|I|^2\Big),
\end{equation}
for all $t \geq |I|^2.$ Similarly, from \eqref{eq:regu2} we obtain 
\begin{equation}\label{eq:regunuk2}
\|\nu_k(t + |I|^2)\|_{L^\infty(I)} \leq \frac{1}{\sqrt{8\pi|I| }}\Big(1 +
2\pi|k|{\it Osc}(V)|I|^2\Big)\|\nu_k(t)\|_{L^2(I)},
\end{equation}
for all $t > 0.$

It remains to analyze the long time exponential decay of the $L^2$ norm using
Proposition \ref{prop:v1} and/or Proposition \ref{prop:v2}. Note that in doing
so, the $V$ involved in the skew-adjoint part of what was called operator 
$A$ in Section \ref{sect:spectral} is actually here equal to $2\pi k V$. We
distinguish two cases.

\noindent{\bf Case 1:} We assume that
\begin{equation}\label{eq:grandk}
k^2 \geq \frac{9}{2\pi}e^\frac{8K}{|I|^2}.
\end{equation}
We first rephrase assumption $(H)$ as 
\begin{equation}\label{eq:omega1borne}
\omega_1(\eps, V) \geq \frac{1}{2\eps}e^{-\frac{K}{4\eps^2}} \qquad\forall\, 0 <
\eps \leq \frac{|I|}{2},
\end{equation}
where the function $\omega_1$ was defined in \eqref{def:omega1}. Since $\omega_1$ 
is continuous and increasing in $\eps$, and vanishes as $\eps \to 0$, from 
\eqref{eq:grandk} we may find $0< \eps(k) \leq \frac{|I|}{2}$ such that 
$$
\eps(k)4\pi^2k^2\omega_1(\eps(k), V) = 9\pi,
$$
and from \eqref{eq:omega1borne} we deduce the lower bound
$$
\frac{1}{\eps(k)^2} \geq \frac{4}{K}\log\Big(\frac{2\pi}{9}k^2\Big). 
$$
Proposition \ref{prop:v2} together with the identity $\varphi(\frac{\pi}{4}) =
9\pi$ then yields 
\begin{equation*}\label{eq:borner1}\begin{split}
r(\lambda_1, 2\pi k V) &\geq \frac{\pi^2}{4K}\log\Big(\frac{2\pi}{9}k^2\Big) -
\lambda_1\\
&\geq \frac{\pi^2}{8K}\log\Big(\frac{2\pi}{9}k^2\Big),
\end{split}\end{equation*}
where we have used once more \eqref{eq:grandk}, and also the identity $\lambda_1 =
\frac{\pi^2}{|I|^2}.$ Combining the previous inequality with
\eqref{eq:weiineq}, \eqref{eq:regunuk1} and \eqref{eq:regunuk2}, we finally deduce that
$$
\|\nu_k(t + 2|I|^2)\|_{L^\infty(I)} \leq \frac{\beta^2 k^2}{8\pi|I|}
e^{\frac{\pi}{2}} \Big(\frac{9}{2\pi k^2}\Big)^{\frac{\pi^2}{8K}t} e^{-\lambda_1
t}, \quad \forall\, t > 0
$$
(recall $\beta = 1 + 2\pi{\it Osc}(V)|I|^2$ as in the statement of Theorem \ref{thm:Hor}), which 
we may rewrite as
$$
\|\nu_k(t)\|_{L^\infty(I)} \leq \frac{9\beta^2}{16\pi^2|I|}
e^{\frac{\pi}{2} + 2\pi^2} \Big(\frac{9}{2\pi
k^2}\Big)^{\frac{\pi^2}{8K}(t-2|I|^2) - 1} e^{-\lambda_1 t}, \quad \forall\, t
\geq 2|I|^2.
$$
In view of our goal \eqref{eq:nukperturb}, we notice that for $t = T = t_H -
\frac14$, the exponent $\frac{\pi^2}{8K}(t-2|I|^2) - 1$ is larger than one, 
and since besides we have $2\pi k^2 \geq 9$, it follows that
\begin{equation*}
\|\nu_k(T)\|_{L^\infty(I)} \leq \frac{81\beta^2}{32\pi^3|I|}
e^{\frac{\pi}{2} + 2\pi^2} \frac{1}{k^2} e^{-\lambda_1 T},
\end{equation*}
and hence that for any $\Lambda \in \N$ such that $\Lambda^2 \geq
\frac{9}{2\pi}e^{\frac{8K}{|I|^2}},$
\begin{equation*}
\sum_{|k| > \Lambda}\|\nu_k(T)\|_{L^\infty(I)} \leq \frac{81\beta^2}{16\pi^3|I|}
e^{\frac{\pi}{2} + 2\pi^2} \frac{1}{\Lambda} e^{-\lambda_1 T}.
\end{equation*}
In view of our goal \eqref{eq:nukperturb} once more, we now fix  
\begin{equation}\label{def:Lambda}
\Lambda := \max\Big(\frac{3}{\sqrt{2\pi}}e^{\frac{4K}{|I|^2}},
\frac{81\beta^2}{4\pi^{5/2}|I|}e^{1 + \frac{\pi}{2} + 2\pi^2} \Big)
\end{equation}
and by construction we so obtain  
\begin{equation}\label{eq:finalcase1}
\sum_{|k| > \Lambda}\|\nu_k(T)\|_{L^\infty(I)} 
\leq \frac{1}{4\sqrt{\pi e}} e^{-\frac{\pi^2}{|I|^2} T} 
= \frac{\kappa_1}{2} e^{-\frac{\pi^2}{|I|^2} T}.
\end{equation}

\noindent{\bf Case 2:} We assume that $0 \neq |k| \leq \Lambda.$

In that situation we rely on Proposition \ref{prop:v1} instead, providing the
($k$ independent) lower bound 
$$
r(\lambda_1, 2\pi kV) 
\geq \frac{4\pi^2k^2
\omega_2(V)^2}{18\big(\frac{\pi^2}{|I|^2} + 4 k^2{\it Osc}(V)^2|I|^2\big)}
\geq \frac{2\omega_2(V)^2|I|^2}{9\beta^2},
$$
and then, along the same lines as above, the estimate
\begin{equation}\label{eq:finalcase2}
\sum_{|k| \leq \Lambda}\|\nu_k(T)\|_{L^\infty(I)} \leq D  e^{-\lambda_1 T},
\end{equation}
where
$$
D := \frac{\beta^2(\Lambda + 1)^3}{12\pi|I|} e^{\frac{\pi}{2} + 2\pi^2}
e^{-\frac{2\omega_2(V)^2|I|^2}{9\beta^2})(T-2|I|^2)}.
$$
If we can show that 
\begin{equation}\label{eq:borneD}
D \leq \frac{1}{4\sqrt{e\pi}} = \frac{\kappa_1}{2},
\end{equation}
then the claim \eqref{eq:nukperturb} will follow from summation of
\eqref{eq:finalcase1} and \eqref{eq:finalcase2}. Inverting the relation defining
$D$, and recalling $T = t_H - \frac{1}{4}$, \eqref{eq:borneD} is seen equivalent to 
\begin{equation}\label{eq:borneH}
t_H \geq \frac14 + 2|I|^2 + \frac{9\beta^2}{2|I|^2\omega_2(V)^2} \left[ \log\Big(
\frac{\beta^2(\Lambda + 1)^3}{3 \sqrt{\pi}}\Big) + \frac12 + \frac{\pi}{2} +
2\pi^2\right].
\end{equation}
At this point it suffices to recall the definition \eqref{def:Lambda} 
of $\Lambda$ to realize that \eqref{eq:borneH} is satisfied for our 
defining choice of $t_H$ in \eqref{def:tH}. As a matter of fact, the definition
\eqref{def:tH} is only a cosmetic improvement to the expression on the r.h.s.  
of \eqref{eq:borneH}, the details, which are omitted, only rely on explicit 
inequalities on universal constants to reduce their number, and the fact that 
$|I| \leq 1$ and $K \geq 1.$ This completes the proof of Theorem \ref{thm:Hor}.\qed

\section{Additional remarks}\label{sec:fourretout}

\subsection{On the lack of smoothing}
As we mentioned in the introduction, when $V$ is not hypoelliptic the smoothing
properties of Equation~\eqref{eq:u} may be very limited. Smoothing in $x$ of
course always occur, although it is in general limited to $W^{2,\infty}$ when
$V$ is barely bounded measurable, but the key point is the smoothing in $y$. 

When $V$ has a plateau, then it is easy to realize that the fundamental solution 
corresponding to an initial Dirac delta located within the plateau, when
integrated in $x$, will conserve, for all positive times, an atomic part. In a
probabilistic view-point, this is related to the well-known fact that given a  
fixed open interval $I$ around their initial position and a fixed end time $T$,  
Brownian particles remain in $I$ at least up to time $T$ with a positive
probability. 

The next example was computed by L\'evy \cite{Levy39} in 1939 (see also e.g.
\cite{Ber21} for a more recent account), and is one of the
few processes with explicitly computable laws.  Consider a Wiener process 
$(\mathcal{W}_t)_{t\geq 0}$ in the real line starting at $x = 0$ and for $t > 0$ 
define the average time spent in the positive half-space:
$$
Z_t := \frac{1}{t}\int_0^t 1_{\mathcal{W}_s > 0}\, ds.
$$
Then the law of the process $Z_t$ is the so-called arcsine law, i.e. 
\begin{equation}\label{eq:arcsine}
	P(Z_t < a) = \frac{2}{\pi} \arcsin\big(\sqrt{a}\big)\qquad \forall\, 0 \leq a \leq
	1.
\end{equation}
Consider then the equation
$$
\partial_t u - \partial_{xx} u + H(x)\partial_y u = 0
$$
on $\R_+ \times \R^2.$ Then in view of \eqref{eq:sp} the marginal in $y$ of
$S(t)\delta_{(0,0)}$ has a law given by $Y_t = \sqrt{2} t Z_t$, and therefore
$$
P(Y_t < b) = \frac{2}{\pi}\arcsin\big(\sqrt{\tfrac{b}{\sqrt{2}t}}\big) \qquad \forall\, 0 < b < \sqrt{2}t,
$$
so that after differentiation
$$
\int_\R u(t,x,y)\, dx = \tfrac{1}{\pi
\sqrt{y(\sqrt{2}t-y)}} \notin L^2_{loc}(dy).
$$
Although that example is set on $\R^2$ instead of $\T^2$, that difference 
shouldn't have much influence on the short time behavior of solutions with 
an initial Dirac delta located at a jump point of $V(x)$, and strongly suggest
that no $L^1$ to $L^2$ smoothing may be expected in such situations, even
letting aside plateaux. 

\subsection{On existing alternative strategies}
Starting around the beginning of this century, a number of works have flourished 
with the general idea that hypoellipticity assumptions could be relaxed, as long 
as convergence to equilibrium is the main objective rather than regularity, to 
weaker forms that focus on identifying Lyapunov type functionals adapted to the 
framework at hand.  These methods have been associated with the name of 
{\it hypocoercivity} to stress that change of focus, and in many cases they 
amount to construct a perturbed norm, through the addition of ``mixed'' terms 
that play a role similar to commutators in hypoelliptic frameworks, that is then 
shown to decay in time. Such methods  have the advantages that in principle they 
should be more robust to extensions to nonlinear models and also do not 
necessarily require a  regularizing effect.

In \cite{Vil09}, Villani proposed a number of abstract Hilbert space frameworks 
that imply hypocoercivity, and used them to (re/im)prove existing relaxation 
results both for linear models, such as the kinetic Fokker-Planck equation, 
and nonlinear ones, in particular the Boltzmann equation. For the latter, and 
due to the notably weak regularizing properties of collision operators, the 
convergence was shown to be faster than any negative power but not exponential. 
For a linear Boltzmann type model, Hérau \cite{Her06} was able to prove 
exponential convergence using non local mollifiers in the construction of the 
Lyapunov functional. This idea was generalized to an abstract Hilbert framework 
by Dolbeault, Mouhot and Schmeiser \cite{DolMouSch15}, and later used in different 
contexts. In a different direction, and with the goal of extending the 
applicability of the method to cases where the natural norm is not hilbertian, 
such as $L^1$ or measure spaces, Gualdani, Mischler and Mouhot \cite{GuaMisMou17} 
presented an abstract framework allowing to extend semi-group decay estimates 
from a smaller Banach (in particular Hilbert) space, to a larger Banach space. 

\smallskip

Below, we briefly report on our naive tests at using the abstract frameworks 
of \cite{Vil09, DolMouSch15, GuaMisMou17} for Equation~\eqref{eq:u}. In short, 
although they require less regularity than hypoellipticity, each commutator 
is ``burning'' one derivative of $V$ and that prevented us from obtaining 
the equivalent of what spectral or probabilistic methods yielded in previous 
sections. This limitation is not seen in models originating from kinetic theory, 
since in these cases $V(x)=x$ ($x=v$ is the velocity variable). For ease of 
reference, we use the notations in the respective papers below.  

First, regarding Theorem 24 in \cite{Vil09}, with $A = \partial_x$ and 
$B = \partial_t + V(x)\partial_y$. If $V$ is regular and does not suffer from 
degeneracy, in particular if it doesn't have plateaux, it seems natural to 
choose $N_c = 1$, so that $C_2 = 0$ and (with the naive choice of $Z_1 = I$ and 
$R_1 = 0$) $C_1 = [A,B] = V'(x)\partial_y$ and $R_2 = [C_1, B] = 0$.  
Assumption $i)$ then requires that $[A, C_1] = V''(x)\partial_y$ should be 
relatively bounded to $C_1 = V'(x)\partial_y$ and $A^2 = \partial_{xx}$, which 
amounts to a regularity assumption on $V$. The same applies to Theorem 28.

Next, concerning Theorem 2 in \cite{DolMouSch15}. Here the corresponding setting 
is $L = \partial_{xx}$, $T = V(x)\partial_y$, $\Pi f(y) = \int_{\T}f(x,y)\, dx$ 
takes the mean in $x$, and only functions that have global mean equal to zero 
are considered. The ``microscopic'' coercivity assumption $(H_1)$ is simply 
the Poincaré-Wirtinger inequality in $x$, integrated in $y$, and the ``macroscopic'' 
coercivity assumption $(H_2)$ holds with $\lambda_m = \int_\T V^2(x)\, dx.$ 
Assumption $(H_3)$ about the equality $\Pi T \Pi = 0$ is verified as well, 
but not assumption $(H_4)$ since the mollifier $A$ does not cope for the 
loss of regularity in $y$ unless additional assumptions on $V$ are required.

Finally, regarding the framework in \cite{GuaMisMou17}, it might require less 
regularity on $V$ but as we just saw in the previous subsection plateaux of 
$V$ prevent the $L^1$ to $L^2$ smoothing of the semi-group, a situation 
which seems to be a pre-requisite for the approach.

\subsection{On an optimal control approach}\label{sect:hj}
The following approach to mixing is partly inspired from the analysis of Li and
Yau \cite{LiYau86} of the heat kernel for Schr\"odinger operators, and especially
Section 2 related to Harnack inequalities. It provides in particular an elegant 
construction of the fundamental solution to the Fokker Planck equation, which
was explicitly computed by Kolmogorov \cite{Kol34} in 1934, without
indication of the method\footnote{Presumably Fourier transform.}. 

\medskip

Because Equation~\eqref{eq:u} is positivity preserving, it is natural to 
perform the change of unknown 
\begin{equation}\label{def:psi}
u = e^{-\psi},
\end{equation}
which then yields the equation
\begin{equation}\label{eq:psi}
\partial_t \psi + |\partial_x \psi|^2 - \partial_{xx}\psi +
V(x)\partial_y \psi = 0
\end{equation}
for $\psi.$ Let us momentarily ignore the second order term in \eqref{eq:psi}, 
and consider the corresponding nonlinear first order equation 
\begin{equation}\label{eq:psibis}
\partial_t \psi + |\partial_x \psi|^2 +
V(x)\partial_y \psi = 0.
\end{equation}
It is a well-known fact (see e.g. \cite{PonBoGaMi62}) that, at least formally, 
Hamilton-Jacobi equations written under the form
\begin{equation}\label{eq:HJ}
\left\{
\begin{array}{l}
\partial_t \psi + H(z, \nabla \psi) = 0,\\
H(z, p) = \max_w p\cdot f(z,w) - g(z, w),
\end{array}\right.
\end{equation}
are related to the optimal control problems
\begin{equation}\label{eq:control}
\left\{
\begin{array}{l}
\psi(t,z) := \inf \int_0^t g(Z(s), w(s))\, ds,\\
\text{with } \dot{Z}(s) = f(Z(s), w(s)),\ Z(0) = z_0,\ Z(t) = z,
\end{array}\right.
\end{equation}
where the counter part in \eqref{eq:HJ} for the initial trajectory position $z_0$ of 
\eqref{eq:control} is the initial cost
\begin{equation*}
\psi(t=0,z)=\left\{
\begin{array}{ll}
0 & z=z_0,\\
+\infty& \text{otherwise,}
\end{array}\right.
\end{equation*} and the corresponding initial Dirac delta in terms of $u$.

In the special case of Equation~\eqref{eq:psibis}, and writing $z_0 = (x_0,
y_0)$, the previous analogy leads to the
ODE Cauchy problem constraint  
\begin{equation}\label{eq:ODE}
	\left\{\begin{array}{lll}
		\dot{X}(s) = w(s), & X(0) = x_0, &X(t) = x\\
		\dot{Y}(s) = V(X(s)), & Y(0) = y_0, & Y(t) = y,
	\end{array}\right.  
\end{equation}
which of course is reminiscent of \eqref{eq:sp}, and to the cost function 
\begin{equation}\label{eq:cost}
\psi(t,x,y) = \inf_w \int_0^t \frac{1}{4} w^2(s) \, ds.
\end{equation}
Solutions to that optimal control problem can be explicitly constructed in the case 
of the Kolmogorov equation, i.e. when $V(x) = x$ and
the spatial domain is not the torus but the whole space. Indeed, the first order
optimality condition for trajectories then reads
$$
\int_0^t w(s)\varphi(s)\, ds = 0 \qquad\forall\, \varphi \text{ s.t. }
\int_0^t\varphi = \int_0^t\int_0^s \varphi = 0,
$$
and its solutions are affine functions. The end-points constraints turn into 
$$
\begin{array}{l}
	\int_0^t \dot{X}(s) \, ds = \int_0^t w(s)\, ds = x - x_0,\\[2pt]
	\int_0^t \dot{Y}(s) \, ds = \int_0^t X(s)\, ds = \int_0^t [x_0 +
	\int_0^s w(\tau)\, d\tau] ds = y - y_0,
\end{array}
$$
and are easily solved, writing $w(s) = a + 2bs$, as
$$
\begin{pmatrix} a\\b\end{pmatrix} = -\frac{6}{t^3} 
\begin{pmatrix} \frac{t^3}{3} & -t\\ - \frac{t}{2} & 1 \end{pmatrix} 
\begin{pmatrix} x-x_0\\ y - y_0 - x_0t \end{pmatrix}.
$$
Injection into \eqref{eq:cost} then leads to the expression
\begin{equation}\label{eq:explicitKolmo1}
	\psi(t,x,y) = \frac{1}{4t}\big(x-x_0\big)^2 + \frac{3}{t^3}\big( y - y_0 - \tfrac{x +
x_0}{2}t\big)^2.
\end{equation}
At this point we remind that we have initially omitted the second order term
$\partial_{xx} \psi$ when passing from \eqref{eq:psi} to \eqref{eq:psibis}. Note
however that in the special case \eqref{eq:explicitKolmo1} we have
$$
\partial_{xx}\psi = \frac{2}{t} \quad\text{ is constant in space,} 
$$
and therefore replacing $\psi$ by $\psi + 2\log t$ we obtain 
\begin{equation}\label{eq:fundaKolmo}
	u(t,x,y) = \frac{1}{t^2} \exp\Big(\frac{1}{4t}\big(x-x_0\big)^2 + \frac{3}{t^3}\big( y - y_0 - \tfrac{x +
x_0}{2}t\big)^2\Big),
\end{equation}
which, up to a multiplicative normalization factor $\frac{2\sqrt{3}}{\pi}$, is
exactly the expression found in \cite{Kol34} for the fundamental solutions
of
$$
\partial_t u - \partial_{xx}u + x\partial_y u = 0
$$
on $\R_+\times \R^2.$

\medskip

When $V(x)$ is arbitrary, there are of course no explicit expressions for the
optimal trajectories of the control problem, and optimal trajectories may even
not exist in general. Besides, there is absolutely no reason for which
$\partial_{xx} \psi$ would remain constant in space, thus preventing the easy
short-cut above between \eqref{eq:psibis} and \eqref{eq:psi}.

However, eventually we are only interested in pointwise lower bounds for $u$,
that is pointwise upper bounds for $\psi$, and inspection of the argument above 
shows that what eventually matters are upper bounds of the form 
$$
\partial_{xx} \psi \leq C(t),
$$
or more generally convexity upper bounds. In view of its definition as the
infimum of an optimal control problem, in order to estimate second order
variations such as 
$$
\psi(t, x+\eps, y) - 2\psi(t,x, y) + \psi(t,x-\eps, y),
$$
it suffices, $\psi(t,x,y)$ being fixed and given, to exhibit {\it some} admissible
control trajectories for the other two terms which lead to an upper bound in $C(t)\eps^2.$ This in
principle is much more amenable than computing an exact solution, and may
require only minimal assumption on $V$. Yet, there remain difficulties related
to the so-called anormal control trajectories, which arise in particular in the
boundary of the controlled region when e.g. $V$ is bounded. We did not succeed
to fully circumvent them, and therefore leave the question open for further 
investigations.

\medskip

\noindent{ \bf Acknowledgments} 
This work was supported by the  Fondation Simone et Cino Del Duca, Institut de France, 
and by the National Key R\&D Program of China, Project Number 2021YFA1001200. 
Authors also benefited from fruitful discussions with many colleagues including 
Nicolas Fournier, Bernard Helffer, Benoît Perthame and Zhifei Zhang.

\bibliographystyle{abbrv}
\bibliography{Mixing}

{\footnotesize 
\medskip\noindent
{\bf Xu'an Dou}\    
Beijing International Center for Mathematical Research\\
Peking University, China.\\
Email:\: {\tt dxa@pku.edu.cn}

\medskip\noindent
{\bf Delphine Salort}\ Laboratoire Jacques-Louis Lions UMR7598\\
Sorbonne Universit\'e, CNRS, Universit\'e de Paris, Inria, France.\\
Email:\: {\tt delphine.salort@sorbonne-universite.fr}

\medskip\noindent
{\bf Didier Smets}\ Laboratoire Jacques-Louis Lions UMR7598\\
Sorbonne Universit\'e, CNRS, Universit\'e de Paris, Inria, France.\\
Email:\: {\tt didier.smets@sorbonne-universite.fr}

}
\end{document}